\documentclass[11pt,reqno]{amsart}

\usepackage[T1]{fontenc}
\usepackage{lmodern}
\usepackage[utf8]{inputenc}

\usepackage[british]{babel}

\usepackage{mathtools}
\usepackage{amssymb}
\usepackage{amsthm}

\usepackage{microtype}

\usepackage{graphicx}
\usepackage{booktabs}

\usepackage[hidelinks]{hyperref}

\usepackage[nameinlink,capitalize,noabbrev]{cleveref}

\usepackage{comment}
\usepackage{algorithm}
\usepackage{algorithmicx}
\usepackage{algpseudocode}

\numberwithin{equation}{section}

\theoremstyle{plain}
\newtheorem{theorem}{Theorem}[section]
\newtheorem{lemma}[theorem]{Lemma}

\newtheorem{corollary}[theorem]{Corollary}

\theoremstyle{definition}
\newtheorem{definition}[theorem]{Definition}
\newtheorem{assume}[theorem]{Assumption}

\theoremstyle{remark}
\newtheorem{remark}[theorem]{Remark}
\newtheorem{example}[theorem]{Example}
\newtheorem{note}[theorem]{Note}
\newtheorem{Cond}{Condition}{\bf}{\it}

\def\diag{\mathop{\mathrm{diag}}}

\crefname{assumption}{Assumption}{Assumptions}

\title[Exact-curved Lagrange FEs for the Poisson problem]{Exact-curved Lagrange finite elements for the Poisson problem in two dimensions}

\author{Hiroki Ishizaka}
\address{Team FEM, Matsuyama, Japan}
\email{h.ishizaka005@gmail.com}

\date{\today}

\subjclass[2020]{65D05 \and 65N15 \and 65N30} 
\keywords{Poisson equation \and exact curved finite elements \and Lagrange finite elements \and interpolation error estimates \and anisotropic meshes}

\begin{document}

\begin{abstract}
We develop an exact-curved Lagrange finite element framework for the Poisson problem on two-dimensional curved domains.  The element map is factorised as $ F_K=\Psi_K\circ\Phi_{T_K}$, where $\Phi_{T_K}$ maps the reference triangle to an affine core and $\Psi_K$ maps the affine core to the physical curved element.  This factorisation separates affine scaling from curvature effects and allows the interpolation analysis to be carried out first on the affine core and then transferred to the exact curved element. For conforming linear Lagrange elements, we prove local $L^2$- and $H^1$-interpolation estimates on exact curved triangles.  The estimates are expressed in terms of transported directional derivatives on the physical element, and the constants are independent of the anisotropic shape of the affine core under the stated semi-regularity assumptions.  These interpolation estimates are then applied to derive energy-norm and $L^2$-error estimates for the Poisson problem.  Numerical results on the unit disk illustrate the difference between straight-sided and curved geometric representations: the curved geometry reduces the geometric error substantially, while the leading finite element error remains governed by the $\mathbb{P}^1$ approximation.
\end{abstract}

\maketitle


\section{Introduction} \label{intro}
The study of finite element methods on curved domains has been approached from various perspectives. A traditional line of inquiry focuses on interpolation and approximation directly on curved elements, as evidenced by the foundational works of Ciarlet and Raviart \cite{CiaRav72}, Zl{\'a}mal \cite{Zla73}, Scott \cite{Sco73}, Lenoir \cite{Len86}, and Bernardi \cite{Ber89}. Among these, Bernardi's interpolation theory is particularly aligned with the perspective of this paper. In \cite{Ber89}, curved finite elements that precisely conform to the physical boundary are examined, and optimal interpolation estimates are derived under appropriate regularity conditions on the family of curved elements. In this context, a curved simplex is characterized as a perturbation of an affine simplex,
\begin{align*}
G_K = \widetilde G_K + \phi_K,
\end{align*}
where $\widetilde G_K$ is an invertible affine map and $\phi_K$ is a sufficiently small curved perturbation.

Another significant perspective is the domain-perturbation approach. For instance, Elliott and Ranner \cite{EllRan13} analyzed finite element methods for coupled bulk-surface elliptic problems by formulating the discrete problem on an approximate domain and comparing it with the continuous problem on the physical domain through suitable lift operators. This framework differs from the exact-domain perspective: the approximation of geometry is considered a variational crime and is estimated separately from the finite element approximation error. Additional background on finite element methods for surface PDEs can be found in Dziuk and Elliott \cite{DziEll13}.

The present paper is closer in spirit to the exact-domain perspective of Bernardi \cite{Ber89} than to the domain-perturbation framework of Elliott and Ranner~\cite{EllRan13}. We do not introduce a polygonal or polynomially approximated computational domain on which a different variational problem is solved. Instead, the finite element space is constructed directly on the physical domain using curved element maps. Our primary distinction from the classical curved-element interpolation theory is that the element map is organized as
\begin{align*}
F = \Psi \circ \Phi,
\end{align*}
where $\Phi$ represents the affine scaling of the element defined in Section \ref{ref=affine=subsec} and $\Psi$ represents the curvature correction defined in Section \ref{subsec:exact-curved-triangles}. This factorization is designed to separate affine scaling, curvature effects, and, in subsequent developments, anisotropic directional scales.

Although Bernardi's map $G_K = \widetilde G_K + \phi_K$ can formally be rewritten as a composition
\begin{align*}
G_K
=
\left(\operatorname{id}+\phi_K \circ \widetilde G_K^{-1}\right) \circ \widetilde G_K,
\end{align*}
the analysis in \cite{Ber89} is not organized around a separation between affine scaling and curvature. In this paper, this separation is the starting point of the analysis. Thus, the objective is not to introduce another isoparametric representation of curved elements, but to organize the interpolation analysis around the separation between affine scaling and curved correction. Even in the isotropic two-dimensional Poisson problem considered here, this formulation is beneficial as it establishes a framework in which curvature effects and mesh anisotropy can later be addressed independently; this aligns with anisotropic interpolation estimates based on explicit geometric parameters, such as those studied in \cite{IshKobTsu23}.

The contributions of this paper are as follows. First, we establish exact-curved triangulations and Lagrange finite element spaces utilizing the factorized map $F_K=\Psi_K\circ\Phi_{T_K}$. Second, we demonstrate local interpolation estimates on exact curved triangles by integrating affine-core anisotropic interpolation estimates with transfer estimates via $\Psi_K$. The resultant estimates are articulated in terms of transported directional derivatives on the physical curved element. Third, we apply these interpolation estimates to the conforming linear finite element approximation of the Poisson problem and derive the corresponding $H^1$- and $L^2$-error estimates. Finally, we provide an implementation-oriented description of the elementwise assembly and present a numerical experiment on the unit disk, comparing straight-sided and curved geometric representations.

The remainder of the paper is structured as follows. Section~\ref{sec:curved-triangulations} introduces exact curved triangles, exact-curved triangulations, and the corresponding Lagrange finite element spaces. Section~\ref{sec:interpolation} establishes interpolation error estimates on exact curved elements. Section~\ref{model=sec} applies the interpolation theory to the Poisson problem. Section~\ref{sec:numerical-algorithm} describes the numerical assembly and its implementation in a finite element code with curved geometry. Section~\ref{sec:numerical-experiment} presents the numerical experiment on the unit disk. Section~\ref{sec:concluding-remarks} concludes the paper with remarks on potential extensions.

\section{Exact curved triangulations and Lagrange finite elements} \label{sec:curved-triangulations}

\subsection{Reference element and a two-step affine map} \label{ref=affine=subsec}
We define the reference elements $\widehat{T} \subset \mathbb{R}^2$. Let $\widehat{T} \subset \mathbb{R}^2$ be a reference triangle with vertices $\hat{p}_1 := (0,0){^{\top}}$, $\hat{p}_2 := (1,0){^{\top}}$, and $\hat{p}_3 := (0,1){^{\top}}$. To an affine simplex $T \subset \mathbb{R}^2$, we construct two affine mappings $\Phi_{\widetilde{T}}: \widehat{T} \to \widetilde{T}$ and $\Phi_{T}: \widetilde{T} \to T$. First, we define the affine mapping $\Phi_{\widetilde{T}}: \widehat{T} \to \widetilde{T}$ as
\begin{align}
\displaystyle
\Phi_{\widetilde{T}}: \widehat{T} \ni \hat{x} \mapsto \tilde{x} := \Phi_{\widetilde{T}}(\hat{x}) := {A}_{\widetilde{T}} \hat{x} \in  \widetilde{T}, \label{aff=1}
\end{align}
where ${A}_{\widetilde{T}} \in \mathbb{R}^{2 \times 2}$ is an invertible matrix. We then define the affine mapping $\Phi_{T}: \widetilde{T} \to T$ as follows:
\begin{align}
\displaystyle
\Phi_{T}: \widetilde{T} \ni \tilde{x} \mapsto y := \Phi_{T}(\tilde{x}) := {A}_{T} \tilde{x} + b_{T} \in T, \label{aff=2}
\end{align}
where $b_{T} \in \mathbb{R}^2$ is a vector and ${A}_{T} \in O(2)$ denotes the rotation and mirror-imaging matrix. We define the affine mapping $\Phi: \widehat{T} \to T$ as
\begin{align}
\displaystyle
\Phi := {\Phi}_{T} \circ {\Phi}_{\widetilde{T}}: \widehat{T} \ni \hat{x} \mapsto y = \Phi (\hat{x}) =  ({\Phi}_{T} \circ {\Phi}_{\widetilde{T}})(\hat{x}) = {A} \hat{x} + b_{T} \in T, \label{affinemap}
\end{align}
where ${A} := {A}_{T} {A}_{\widetilde{T}} \in \mathbb{R}^{2 \times 2}$.

\subsubsection*{Construct mapping $\Phi_{\widetilde{T}}: \widehat{T} \to \widetilde{T}$} \label{sec221} 
We consider the affine mapping \eqref{aff=1}. We define the matrix $ {A}_{\widetilde{T}} \in \mathbb{R}^{2 \times 2}$ as follows. We first define the diagonal matrix as
\begin{align}
\displaystyle
\widehat{A} :=  \diag (h_1,h_2), \quad h_1, h_2 \in \mathbb{R}_+ , \label{aff=3}
\end{align}
where $\mathbb{R}_+$ denotes the set of positive real numbers.

For $d=2$, we define the regular matrix $\widetilde{A} \in \mathbb{R}^{2 \times 2}$ as
\begin{align}
\displaystyle
\widetilde{A} :=
\begin{pmatrix}
1 & s \\
0 & t \\
\end{pmatrix}, \label{aff=4}
\end{align}
with the parameters
\begin{align*}
\displaystyle
s^2 + t^2 = 1, \quad t \> 0.
\end{align*}
For the reference element $\widehat{T}$, let $\mathfrak{T}^{(2)}$ be a family of triangles.
\begin{align*}
\displaystyle
\widetilde{T} &= \Phi_{\widetilde{T}}(\widehat{T}) = {A}_{\widetilde{T}} (\widehat{T}), \quad {A}_{\widetilde{T}} := \widetilde {A} \widehat{A}
\end{align*}
with the vertices $\tilde{p}_1 := (0,0)^{\top}$, $\tilde{p}_2 := (h_1,0)^{\top}$ and $\tilde{p}_3 :=(h_2 s , h_2 t)^{\top}$. Then, $h_1 = |\tilde{p}_1 - \tilde{p}_2| \> 0$ and $h_2 = |\tilde{p}_1 - \tilde{p}_3| \> 0$. 

\subsubsection*{Construct mapping $\Phi_{T}: \widetilde{T} \to T$}  \label{sec322}
We determine the affine mapping \eqref{aff=2} as follows. Let ${T} \in \mathbb{T}_h$ have vertices ${p}_i$ ($i=1,\ldots,3$). Let $b_{T} \in \mathbb{R}^2$ be the vector and ${A}_{T} \in O(2)$ be the rotation and mirror imaging matrix such that
\begin{align*}
\displaystyle
p_{i} = \Phi_T (\tilde{p}_i) = {A}_{T} \tilde{p}_i + b_T, \quad i \in \{1, \ldots,3 \},
\end{align*}
where vertices $p_{i}$ ($i=1,\ldots,3$) satisfy the following conditions:

\begin{Cond} \label{cond1}
Let ${T} \in \mathbb{T}_h$ have vertices ${p}_i$ ($i=1,\ldots,3$). We assume that $\overline{{p}_2 {p}_3}$ is the longest edge of ${T}$, that is, $ h_{{T}} := |{p}_2 - {p}_ 3|$. We set $h_1 = |{p}_1 - {p}_2|$ and $h_2 = |{p}_1 - {p}_3|$. We then assume that $h_2 \leq h_1$. {Because $\frac{1}{2} h_T < h_1 \leq h_T$, ${h_1 \approx h_T}$.} 
\end{Cond}

\begin{note}
As an example, we {define} the matrices $A_{T}$ as 
\begin{align*}
\displaystyle
A_{T} := 
\begin{pmatrix}
\cos \theta  & - \sin \theta \\
 \sin \theta & \cos \theta
\end{pmatrix},
\end{align*}
where $\theta$ denotes the angle. 
\end{note}

\subsection{Exact curved triangles} \label{subsec:exact-curved-triangles}
Let \(T\subset\mathbb R^2\) be an affine triangle obtained from the affine map \eqref{affinemap};
\begin{align*}
\displaystyle
\Phi: \widehat{T} \to T.
\end{align*}
We call \(T\) the affine core of the curved element. The terminology \textit{affine core} is used only to distinguish the straight geometric part \(T\) from the curved physical element \(K\); no additional structure is meant by this term. An exact curved triangle associated with \(T\) is a set \(K\subset\mathbb R^2\) of the form
\begin{align*}
\displaystyle
 K = \Psi(T),
\end{align*}
where \(\Psi\) is a sufficiently smooth deformation of the affine core. More precisely, we assume that there exists an open neighbourhood \(U_T\) of \(T\) and a \(\mathcal C^2\)-diffeomorphism
\begin{align*}
\displaystyle
 \Psi: U_T \to \Psi(U_T)
\end{align*}
such that
\begin{align*}
\displaystyle
\det D \Psi (y) > 0 \quad \text{for all } y \in T.
\end{align*}
Here,  for \(y=(y_1,y_2)^{\top} \), we write
\begin{align*}
\displaystyle
D \Psi (y)
= \left(
\frac{\partial \Psi_{i}} {\partial y_j}(y)
\right)_{i,j=1}^2
\end{align*}
for the Jacobian matrix of \(\Psi\). This matrix describes the local deformation induced by \(\Psi\); in particular, its determinant measures the local change of area. The condition
\begin{align*}
\displaystyle
\det D \Psi (y)>0 \quad \text{for all } y \in T
\end{align*}
therefore means that the curved correction is locally non-degenerate and orientation-preserving.

The element map from the reference triangle to the exact curved triangle is defined by
\begin{align*}
\displaystyle
 F := \Psi \circ \Phi: \widehat T \to K.
\end{align*}
If \(K\) is an interior element, we allow the special case
\begin{align*}
\displaystyle
\Psi = \operatorname{id} \quad \text{on } T,
\end{align*}
so that \(K=T\) is an affine triangle. If \(K\) is a boundary element, then \(\Psi\) is chosen so that the boundary edge or edges of \(K\) lie on the physical boundary \(\partial \Omega\).  In particular, if \(e \subset \partial T\) is an affine edge corresponding to a boundary edge, then 
\begin{align*}
\displaystyle
\Psi(e) \subset \partial \Omega.
\end{align*}
Thus, the boundary of the computational mesh is represented by exact curved edges, rather than by polygonal approximations of \(\partial \Omega\).

\begin{example}[A boundary edge on the unit circle] \label{ex:unit-circle-boundary}
Let \(\Omega \subset \mathbb{R}^2\) be the unit disk and suppose that one edge \(e \subset \partial T\) of the affine core has endpoints
\begin{align*}
\displaystyle
 a=(\cos\theta_a,\sin\theta_a)^{\top},\quad b=(\cos\theta_b,\sin\theta_b)^{\top}, \quad \theta_a < \theta_b.
\end{align*}
The corresponding circular boundary arc is parametrised as
\begin{align*}
\displaystyle
\gamma(s) = \left( \cos((1-s)\theta_a+s\theta_b), \sin((1-s)\theta_a+s\theta_b) \right)^{\top}, \quad 0 \leq s \leq 1.
\end{align*}
Thus, the straight chord \((1-s)a+sb\) may be replaced by the exact circular arc \(\gamma(s)\).  One may choose a curved correction \(\Psi\) so that
\begin{align*}
\displaystyle
 \Psi((1-s)a+sb)=\gamma(s), \quad 0 \leq s \leq 1.
\end{align*}
The extension of this boundary prescription to the interior of \(T\) is regarded here as part of the mesh data.
\end{example}

\subsection{Derivatives of composite maps} \label{subsec:composite-derivatives}
Let \(K\) be an exact curved triangle associated with the affine core \(T\).  Recall that the element map is given as
\begin{align*}
\displaystyle
F = \Psi \circ \Phi: \widehat T \to K, \quad \Phi(\hat x) = A \hat  x + b_T.
\end{align*}
Thus, the pullback of a scalar function \(v\) on \(K\) is the double composition
\begin{align*}
\displaystyle
\hat v := v \circ F = v \circ \Psi \circ \Phi.
\end{align*}
Because \(\Phi\) is affine, all derivatives of \(\Phi\) of order two or higher vanish.  This simple fact is used repeatedly below.

In this subsection, we distinguish three variables:
\begin{align*}
\displaystyle
\hat x\in\widehat T,\quad   y = \Phi(\hat x) \in T,\quad   x=F(\hat x)=\Psi(y) \in K .
\end{align*}
Derivatives with respect to the affine-core variable are denoted by \(\partial/\partial y_r\), whereas derivatives with respect to the reference variable are denoted by \(\partial/\partial \hat x_k\).

\begin{lemma}[Derivatives of the element map]
\label{lem:derivatives-element-map}
Let \(F=\Psi\circ\Phi\) with
\begin{align*}
\displaystyle
\Phi(\hat x) = A \hat x+b_T.
\end{align*}
We write \(F=(F_1,F_2)^\top\) and \(\Psi=(\Psi_1,\Psi_2)^\top\). Then, for \(m,k=1,2\),
\begin{align}
\displaystyle
\frac{\partial F_m}{\partial \hat x_k}(\hat x)
= \sum_{r=1}^{2} \frac{\partial \Psi_m} {\partial y_r}(\Phi(\hat x))A_{rk}. \label{element-derivative1}
\end{align}
Furthermore, for \(m,j,k=1,2\),
\begin{align}
\displaystyle
\frac{\partial^2 F_m}{\partial \hat x_j \partial \hat x_k}(\hat x)
= \sum_{r=1}^{2}\sum_{s=1}^{2} \frac{\partial^2\Psi_m}{\partial y_r \partial y_s} (\Phi(\hat x))A_{rj}A_{sk}. \label{element-derivative2}
\end{align}
\end{lemma}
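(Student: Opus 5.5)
The plan is a direct application of the multivariate chain rule to the composition $F=\Psi\circ\Phi$, using that $\Phi$ is affine. First, I will write the components of $\Phi$ explicitly as
\begin{align*}
\Phi_r(\hat x)=\sum_{k=1}^{2}A_{rk}\hat x_k+(b_T)_r,\quad r=1,2.
\end{align*}
Differentiating gives $\partial\Phi_r/\partial\hat x_k=A_{rk}$, a constant. Hence all second derivatives of $\Phi$ vanish.

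Since $\Psi$ is a $\mathcal C^2$-diffeomorphism on the open neighbourhood $U_T\supset T$ and $\Phi(\widehat T)=T$, the composition $F_m=\Psi_m\circ\Phi$ is $\mathcal C^2$ on a neighbourhood of $\widehat T$. Concretely, this holds on $\Phi^{-1}(U_T)$, which is open. Therefore the chain rule applies pointwise there, including at boundary points of $\widehat T$. Applying it once yields
\begin{align*}
\frac{\partial F_m}{\partial\hat x_k}(\hat x)=\sum_{r=1}^{2}\frac{\partial\Psi_m}{\partial y_r}(\Phi(\hat x))\,\frac{\partial\Phi_r}{\partial\hat x_k}(\hat x)=\sum_{r=1}^{2}\frac{\partial\Psi_m}{\partial y_r}(\Phi(\hat x))A_{rk},
\end{align*}
which is \eqref{element-derivative1}.

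For \eqref{element-derivative2}, I will differentiate \eqref{element-derivative1} with respect to $\hat x_j$. Because $A_{rk}$ is constant, the product rule yields only one term. That term is the derivative of $\hat x\mapsto\frac{\partial\Psi_m}{\partial y_r}(\Phi(\hat x))$, a $\mathcal C^1$ function composed with $\Phi$. Applying the chain rule again to it gives
\begin{align*}
\sum_{s=1}^{2}\frac{\partial^2\Psi_m}{\partial y_s\partial y_r}(\Phi(\hat x))A_{sj}.
\end{align*}
The term that would involve second derivatives of $\Phi$ is absent, precisely because $\Phi$ is affine.

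The resulting expression is $\sum_{r,s}\frac{\partial^2\Psi_m}{\partial y_s\partial y_r}(\Phi(\hat x))A_{sj}A_{rk}$. To match the stated form, I will relabel the summation indices $r\leftrightarrow s$. I will then use the symmetry of second derivatives of the $\mathcal C^2$ function $\Psi_m$, which is Schwarz's theorem. This produces exactly \eqref{element-derivative2}.

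The computation is routine, so no step is a genuine obstacle. The only point needing care is bookkeeping. The index placement $A_{rj}A_{sk}$ must pair with $\partial^2/\partial y_r\partial y_s$, which is where Schwarz's theorem or the index swap is used. One must also justify differentiability up to $\partial\widehat T$ via the open neighbourhood $U_T$.
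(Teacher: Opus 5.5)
Your proposal is correct and follows essentially the same route as the paper: the chain rule applied to $F_m=\Psi_m\circ\Phi$ with $\partial\Phi_r/\partial\hat x_k=A_{rk}$, a second differentiation in which no second-derivative term of $\Phi$ appears, and then relabelling of the dummy indices together with the symmetry of second derivatives of $\Psi_m\in\mathcal C^2$. Your extra remark that $F$ is $\mathcal C^2$ on the open set $\Phi^{-1}(U_T)\supset\widehat T$, which justifies the computation up to $\partial\widehat T$, is a small refinement that the paper leaves implicit.
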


\begin{proof}
For each component \(m=1,2\), we have
\begin{align*}
\displaystyle
F_m(\hat x) = \Psi_m(\Phi(\hat x)).
\end{align*}
Because $\Phi(\hat x)=A\hat x+b_T$, its \(r\)-th component is
\begin{align*}
\displaystyle
\Phi_r(\hat x) = \sum_{\ell=1}^2 A_{r\ell}\hat x_\ell+(b_T)_r.
\end{align*}
Therefore,
\begin{align*}
\displaystyle
\frac{\partial \Phi_r}{\partial \hat x_k}(\hat x)
= A_{rk},
\quad
\frac{\partial^2 \Phi_r}{\partial \hat x_j\partial \hat x_k}(\hat x)
= 0.
\end{align*}
Applying the chain rule to \(F_m=\Psi_m\circ\Phi\), we obtain
\begin{align*}
\displaystyle
 \frac{\partial F_m}{\partial \hat x_k}(\hat x)
        =
        \sum_{r=1}^2
        \frac{\partial\Psi_m}{\partial y_r}(\Phi(\hat x))
        \frac{\partial\Phi_r}{\partial\hat x_k}(\hat x).
\end{align*}
Because  $\frac{\partial\Phi_r}{\partial\hat x_k} = A_{rk}$, this gives
\begin{align*}
\displaystyle
\frac{\partial F_m}{\partial \hat x_k}(\hat x)
= \sum_{r=1}^{2} \frac{\partial \Psi_m}{\partial y_r}(\Phi(\hat x))A_{rk}.
\end{align*}

We differentiate this identity with respect to \(\hat x_j\). Because \(A_{rk}\) is constant, we get
\begin{align*}
\displaystyle
\frac{\partial^2 F_m}{\partial \hat x_j\partial \hat x_k}(\hat x)
&=
\sum_{r=1}^2 A_{rk} \frac{\partial}{\partial \hat x_j}
\left[
\frac{\partial\Psi_m}{\partial y_r}(\Phi(\hat x))
\right].
\end{align*}
Applying the chain rule once more gives
\begin{align*}
\displaystyle
\frac{\partial}{\partial \hat x_j}
\left[
\frac{\partial\Psi_m}{\partial y_r}(\Phi(\hat x))
\right]
 =
\sum_{s=1}^2
\frac{\partial^2\Psi_m}{\partial y_s \partial y_r}(\Phi(\hat x))\frac{\partial\Phi_s}{\partial\hat x_j}(\hat x).
\end{align*}
Using $\frac{\partial\Phi_s}{\partial\hat x_j}=A_{sj}$, we have
\begin{align*}
\displaystyle
\frac{\partial^2 F_m}{\partial \hat x_j\partial \hat x_k}(\hat x)
=
\sum_{r=1}^{2}\sum_{s=1}^{2}\frac{\partial^2\Psi_m}{\partial y_s \partial y_r}(\Phi(\hat x))A_{sj}A_{rk}.
\end{align*}
Because  \(\Psi_m\in C^2\), the mixed derivatives commute, and after relabelling the dummy indices \(r\) and \(s\), this is equivalently written as
\begin{align*}
\displaystyle
\frac{\partial^2 F_m}{\partial \hat x_j\partial \hat x_k}(\hat x)
=
\sum_{r=1}^{2}\sum_{s=1}^{2} \frac{\partial^2\Psi_m} {\partial y_r \partial y_s}(\Phi(\hat x))A_{rj}A_{sk}.
\end{align*}
This proves the assertion.
\end{proof}

\subsection{Derivatives of pulled-back functions}
\label{subsec:pullback-derivatives}
The following elementary formulae will be used repeatedly in the interpolation analysis.

\begin{lemma}[Derivatives of a pulled-back scalar function]
\label{lem:derivatives-pullback}
Let $v \in \mathcal{C}^2(K)$, and define
\begin{align*}
\displaystyle
\hat v(\hat x) = v(F(\hat x)).
\end{align*}
Writing $F=(F_1,F_2)^\top$, then, for $k=1,2$,
\begin{align}
\displaystyle
\frac{\partial \hat v}{\partial \hat x_k}(\hat x)
 = \sum_{m=1}^{2} \frac{\partial v}{\partial x_m}(F(\hat x)) \frac{\partial F_m}{\partial \hat x_k}(\hat x). \label{eq:pullback-derivative1}
\end{align}
Furthermore, for $j,k=1,2$,
\begin{align}
\displaystyle
\frac{\partial^2 \hat v}{\partial \hat x_j\partial \hat x_k}(\hat x)
&=
\sum_{\ell=1}^{2}\sum_{m=1}^{2} \frac{\partial^2 v}{\partial x_\ell \partial x_m}(F(\hat x)) \frac{\partial F_\ell}{\partial \hat x_j}(\hat x) \frac{\partial F_m}{\partial \hat x_k}(\hat x) \notag\\
&\quad+
\sum_{m=1}^{2} \frac{\partial v}{\partial x_m}(F(\hat x)) \frac{\partial^2 F_m}{\partial \hat x_j \partial \hat x_k}(\hat x). \label{eq:pullback-derivative2}
\end{align}
Equivalently,
\begin{align*}
\displaystyle
\partial_{\hat x_k}\hat v(\hat x)
= Dv(F(\hat x)) \partial_{\hat x_k}F(\hat x),
\end{align*}
and
\begin{align*}
\displaystyle
\partial_{\hat x_j}\partial_{\hat x_k} \hat v(\hat x)
&= D^2v(F(\hat x)) [\partial_{\hat x_j}F(\hat x), \partial_{\hat x_k}F(\hat x)] \\
&\quad +
Dv(F(\hat x)) \partial_{\hat x_j}\partial_{\hat x_k}F(\hat x).
\end{align*}

\end{lemma}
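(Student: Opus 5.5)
The plan is to apply the multivariable chain rule twice, exactly as in the proof of Lemma~\ref{lem:derivatives-element-map}, now with $v$ playing the role of the outer function and $F$ the inner map. Before doing so I will record the regularity that makes each step legitimate. Since $\Phi$ is affine and $\Psi$ is a $\mathcal C^2$-diffeomorphism on a neighbourhood $U_T$ of $T$, the composite $F=\Psi\circ\Phi$ is $\mathcal C^2$ on $\widehat T$. Together with $v\in\mathcal C^2(K)$, this gives $\hat v=v\circ F\in\mathcal C^2(\widehat T)$. Differentiability up to the boundary is understood in the usual sense for $\mathcal C^2(K)$, via one-sided limits or a $\mathcal C^2$ extension. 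This is the only point needing care, and I expect no genuine obstacle.

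\emph{First derivatives.} For fixed $k$, I differentiate $\hat v(\hat x)=v(F_1(\hat x),F_2(\hat x))$ with respect to $\hat x_k$. The chain rule gives
\[
\partial_{\hat x_k}\hat v(\hat x)=\sum_{m}\partial_{x_m}v(F(\hat x))\,\partial_{\hat x_k}F_m(\hat x),
\]
which is \eqref{eq:pullback-derivative1}. It is equivalently written as $Dv(F(\hat x))\,\partial_{\hat x_k}F(\hat x)$, with $Dv$ regarded as a row vector.

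\emph{Second derivatives.} I differentiate \eqref{eq:pullback-derivative1} with respect to $\hat x_j$ and apply the product rule to each summand. This produces two groups of terms.
\begin{itemize}
\item When the derivative falls on $\partial_{x_m}v(F(\hat x))$, a second use of the chain rule gives $\sum_\ell \partial_{x_\ell}\partial_{x_m}v(F(\hat x))\,\partial_{\hat x_j}F_\ell(\hat x)$, multiplied by $\partial_{\hat x_k}F_m$. This is the double sum in \eqref{eq:pullback-derivative2}.
\item When the derivative falls on $\partial_{\hat x_k}F_m$, it gives $\partial_{x_m}v(F(\hat x))\,\partial_{\hat x_j}\partial_{\hat x_k}F_m(\hat x)$. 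This is the second line of \eqref{eq:pullback-derivative2}.
\end{itemize}
Because $v\in\mathcal C^2$, the mixed partials of $v$ commute, so the ordering of $\ell,m$ in the Hessian is immaterial. The same symmetry holds for $F_m$.

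\emph{Compact form.} Interpreting $D^2v$ as a symmetric bilinear form turns the double sum into $D^2v(F(\hat x))[\partial_{\hat x_j}F,\partial_{\hat x_k}F]$. The remaining sum is the action of $Dv(F(\hat x))$ on $\partial_{\hat x_j}\partial_{\hat x_k}F$, which gives the compact form.

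I also note that, by \eqref{element-derivative1}--\eqref{element-derivative2}, the factors $\partial F$ and $\partial^2F$ could be expanded further in terms of $D\Psi$, $D^2\Psi$ and $A$. That expansion is not required by the statement, so I will not carry it out here.
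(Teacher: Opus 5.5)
Your proposal is correct and follows the paper's proof essentially verbatim: the chain rule gives \eqref{eq:pullback-derivative1}, then the product rule plus a second application of the chain rule gives \eqref{eq:pullback-derivative2}, and the compact form is read off as the bilinear and linear actions of $D^2v$ and $Dv$. Your appeal to symmetry of mixed partials is harmless but not needed, since the chain rule already yields $\partial_{x_\ell}\partial_{x_m}v$ in exactly the order written in \eqref{eq:pullback-derivative2}.
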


\begin{proof}
The first formula is the chain rule applied to $\hat v=v\circ F$.  Because
\begin{align*}
\displaystyle
\hat v(\hat x)=v(F(\hat x)), \quad F=(F_1,F_2)^\top,
\end{align*}
the chain rule gives, for $k=1,2$,
\begin{align*}
\displaystyle
\frac{\partial \hat v}{\partial \hat x_k}(\hat x)
= \sum_{m=1}^{2} \frac{\partial v}{\partial x_m}(F(\hat x)) \frac{\partial F_m}{\partial \hat x_k}(\hat x),
\end{align*}
which proves \eqref{eq:pullback-derivative1}.

We next differentiate this identity with respect to $\hat x_j $.  For each $m=1,2$, the product rule gives
\begin{align*}
\displaystyle
\frac{\partial}{\partial \hat x_j} \left[ \frac{\partial v}{\partial x_m}(F(\hat x)) \frac{\partial F_m}{\partial \hat x_k}(\hat x) \right]
&=
\frac{\partial}{\partial \hat x_j}
\left[
        \frac{\partial v}{\partial x_m}(F(\hat x))
\right]
        \frac{\partial F_m}{\partial \hat x_k}(\hat x)
\\
&\quad+
        \frac{\partial v}{\partial x_m}(F(\hat x))
        \frac{\partial^2 F_m}
             {\partial \hat x_j\partial \hat x_k}(\hat x).
\end{align*}
Applying the chain rule to the first factor yields
\begin{align*}
\displaystyle
\frac{\partial}{\partial \hat x_j} \left[ \frac{\partial v}{\partial x_m}(F(\hat x)) \right]
= \sum_{\ell=1}^{2} \frac{\partial^2 v} {\partial x_\ell\partial x_m}(F(\hat x))\frac{\partial F_\ell} {\partial \hat x_j}(\hat x).
\end{align*}
Therefore,
\begin{align*}
\displaystyle
\frac{\partial^2 \hat v}{\partial \hat x_j\partial \hat x_k}(\hat x)
&=
\sum_{m=1}^{2}
\left[
\sum_{\ell=1}^{2} \frac{\partial^2 v}{\partial x_\ell\partial x_m}(F(\hat x)) \frac{\partial F_\ell}{\partial \hat x_j}(\hat x) \right] \frac{\partial F_m}{\partial \hat x_k}(\hat x) \\
&\quad+
\sum_{m=1}^{2} \frac{\partial v}{\partial x_m}(F(\hat x)) \frac{\partial^2F_m}{\partial \hat x_j\partial \hat x_k}(\hat x),
\end{align*}
which is precisely \eqref{eq:pullback-derivative2}. The multilinear forms follow from the identities
\begin{align*}
\displaystyle
Dv(F(\hat x)) \partial_{\hat x_k}F(\hat x)
= \sum_{m=1}^{2} \frac{\partial v}{\partial x_m}(F(\hat x)) \frac{\partial F_m}{\partial \hat x_k}(\hat x)
\end{align*}
and
\begin{align*}
\displaystyle
&D^2v(F(\hat x)) [\partial_{\hat x_j}F(\hat x), \partial_{\hat x_k}F(\hat x)] \\
&\quad = \sum_{\ell=1}^{2}\sum_{m=1}^{2} \frac{\partial^2 v} {\partial x_\ell\partial x_m}(F(\hat x)) \frac{\partial F_\ell} {\partial \hat x_j}(\hat x)
        \frac{\partial F_m} {\partial \hat x_k}(\hat x).
\end{align*}
The proof is complete.
\end{proof}

\begin{remark}
The second derivative formula contains the term
\begin{align*}
\displaystyle
\sum_{m=1}^{2} \frac{\partial v}{\partial x_m}(F(\hat x)) \frac{\partial^2F_m}{\partial \hat x_j\partial \hat x_k}(\hat x).
\end{align*}
This term vanishes for affine elements, because then $D^2F=0$.  For exact curved elements, it is generally non-zero.  By Lemma \ref{lem:derivatives-element-map}, it is governed by the second derivatives of the curved correction $\Psi$.  Thus, this is the first point at which the curvature of the element enters the differentiated pullback.
\end{remark}

\subsection{Derivatives of the inverse element map}
\label{subsec:inverse-element-map}
For the construction of Lagrange finite element functions on the curved triangle $K$, we shall also use the inverse element map.  We write
\begin{align*}
\displaystyle
G:=F^{-1}:K \to \widehat T.
\end{align*}
Because
\begin{align*}
\displaystyle
F=\Psi \circ \Phi,
\end{align*}
we have
\begin{align*}
\displaystyle
G=F^{-1}=\Phi^{-1}\circ\Psi^{-1}.
\end{align*}
Recall that
\begin{align*}
\displaystyle
\Phi(\hat x) = A \hat x+b_T.
\end{align*}
Thus, in the push-forward direction, the derivatives of $G$ play the same role as the derivatives of $F$ in the pullback direction. 

\begin{lemma}[Derivatives of the inverse element map]
\label{lem:derivatives-inverse-element-map}
Writing $G=(G_1,G_2)^\top$ and $\Psi^{-1}=((\Psi^{-1})_1,(\Psi^{-1})_2)^\top $, one has, for $m,k=1,2$,
\begin{align}
\displaystyle
\frac{\partial G_m}{\partial x_k}(x)
        =
        \sum_{r=1}^{2}
        (A^{-1})_{mr}
        \frac{\partial (\Psi^{-1})_r}
             {\partial x_k}(x).
        \label{eq:inverse-element-derivative1}
\end{align}
Furthermore, for $m,j,k=1,2$,
\begin{align}
\displaystyle
\frac{\partial^2 G_m}
             {\partial x_j\partial x_k}(x)
        =
        \sum_{r=1}^{2}
        (A^{-1})_{mr}
        \frac{\partial^2(\Psi^{-1})_r}
             {\partial x_j\partial x_k}(x).
        \label{eq:inverse-element-derivative2}
\end{align}
\end{lemma}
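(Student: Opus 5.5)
The plan is to repeat the argument of Lemma~\ref{lem:derivatives-element-map}, now with the composition order reversed, so that the affine map sits on the outside. We write $G=\Phi^{-1}\circ\Psi^{-1}$ and note that $\Phi^{-1}(y)=A^{-1}(y-b_T)$ is affine. Its $m$-th component is
\begin{align*}
(\Phi^{-1})_m(y)=\sum_{r=1}^{2}(A^{-1})_{mr}\bigl(y_r-(b_T)_r\bigr).
\end{align*}
Setting $y=\Psi^{-1}(x)$, we obtain the explicit representation
\begin{align*}
G_m(x)=\sum_{r=1}^{2}(A^{-1})_{mr}\bigl((\Psi^{-1})_r(x)-(b_T)_r\bigr),
\end{align*}
valid for $x\in K$, or more precisely on $\Psi(U_T)$, where $\Psi^{-1}$ is defined.

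Next we check the regularity needed to differentiate. Since $\Psi:U_T\to\Psi(U_T)$ is a $\mathcal C^2$-diffeomorphism, its inverse $\Psi^{-1}$ is $\mathcal C^2$ on the open set $\Psi(U_T)\supset K$. By the inverse function theorem it is at least $\mathcal C^1$, and the $\mathcal C^2$ property follows from the formula $D\Psi^{-1}=(D\Psi)^{-1}\circ\Psi^{-1}$, because $D\Psi$ is $\mathcal C^1$ and invertible. This is the only point requiring comment, and it is covered by the definition of a $\mathcal C^2$-diffeomorphism. Consequently the second derivatives of $\Psi^{-1}$ exist and are continuous, so differentiating the representation twice is legitimate.

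Finally we differentiate the representation of $G_m$ with respect to $x_k$. The entries $(A^{-1})_{mr}$ and the vector $b_T$ are constants, so no chain-rule contribution arises from the outer map beyond the matrix itself. This yields \eqref{eq:inverse-element-derivative1} directly. Differentiating once more with respect to $x_j$ gives \eqref{eq:inverse-element-derivative2}, because the coefficients remain constant and $D^2\Phi^{-1}=0$.

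In contrast to the pullback formulas of Lemma~\ref{lem:derivatives-element-map}, no quadratic terms in $A^{-1}$ appear. The reason is that the affine factor is applied last, that is, on the outside of the composition. There is no genuine obstacle in this argument; the only care needed is the $\mathcal C^2$ regularity of $\Psi^{-1}$ discussed above.
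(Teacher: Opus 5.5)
Your proposal is correct and follows the paper's proof essentially verbatim: write $G=\Phi^{-1}\circ\Psi^{-1}$ with $\Phi^{-1}(y)=A^{-1}(y-b_T)$, expand componentwise, and differentiate twice using that $A^{-1}$ and $b_T$ are constant. Your added justification that $\Psi^{-1}$ is $\mathcal C^2$, via $D\Psi^{-1}=(D\Psi)^{-1}\circ\Psi^{-1}$, is a correct detail that the paper leaves implicit.
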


\begin{proof}
Because
\begin{align*}
\displaystyle
G=\Phi^{-1}\circ\Psi^{-1},
\end{align*}
and
\begin{align*}
\displaystyle
\Phi^{-1}(y)=A^{-1}(y-b_T),
\end{align*}
we have
\begin{align*}
\displaystyle
G(x)=A^{-1}(\Psi^{-1}(x)-b_T).
\end{align*}
In components, this is
\begin{align*}
\displaystyle
G_m(x)
 = \sum_{r=1}^{2} (A^{-1})_{mr} \left( (\Psi^{-1})_r(x)-(b_T)_r \right).
\end{align*}
Differentiating with respect to $x_k$, and noting that $A^{-1}$ and $b_T$ are constant, gives
\begin{align*}
\displaystyle
\frac{\partial G_m}{\partial x_k}(x)
= \sum_{r=1}^{2} (A^{-1})_{mr} \frac{\partial(\Psi^{-1})_r}{\partial x_k}(x),
\end{align*}
which proves \eqref{eq:inverse-element-derivative1}. Differentiating once more with respect to $x_j$ gives
\begin{align*}
\displaystyle
\frac{\partial^2G_m}{\partial x_j\partial x_k}(x)
= \sum_{r=1}^{2} (A^{-1})_{mr} \frac{\partial^2(\Psi^{-1})_r} {\partial x_j\partial x_k}(x),
\end{align*}
which proves \eqref{eq:inverse-element-derivative2}.
\end{proof}

\begin{remark}
For the push-forward $v=\hat v\circ F^{-1}$, the derivatives of the inverse map $G=F^{-1}$ play the same role as the derivatives of $F$ in the pullback formula.  In particular, the curved contribution is encoded in the derivatives of $\Psi^{-1}$, because the affine inverse $\Phi^{-1}$ has no derivatives of order two or higher.
\end{remark}

\subsection{Derivatives of pushed-forward functions}
\label{subsec:pushforward-derivatives}
For a scalar function $\hat v$ on $\widehat T$, its push-forward to the exact curved triangle $K$ is defined as
\begin{align*}
\displaystyle
 v := \hat v \circ G = \hat v \circ F^{-1}.
\end{align*}
The following formulae are the counterparts of Lemma~\ref{lem:derivatives-pullback}.

\begin{lemma}[Derivatives of a pushed-forward scalar function]
\label{lem:derivatives-pushforward}
Let $\hat v \in \mathcal{C}^2(\widehat T)$, and we define
\begin{align*}
\displaystyle
v(x) = \hat v(G(x)), \quad x\in K,
\end{align*}
where $G=F^{-1}$. Writing $G=(G_1,G_2)^\top$, one has, for $k=1,2$,
\begin{align}
\displaystyle
\frac{\partial v}{\partial x_k}(x)
=
\sum_{m=1}^{2} \frac{\partial \hat v}{\partial \hat x_m}(G(x)) \frac{\partial G_m}{\partial x_k}(x). \label{eq:pushforward-derivative1}
\end{align}
Furthermore, for $j,k=1,2$,
\begin{align}
\displaystyle
\frac{\partial^2 v}{\partial x_j\partial x_k}(x)
&=
\sum_{\ell=1}^{2}\sum_{m=1}^{2} \frac{\partial^2 \hat v}{\partial \hat x_\ell\partial \hat x_m}(G(x)) \frac{\partial G_\ell}{\partial x_j}(x) \frac{\partial G_m}{\partial x_k}(x) \notag \\
&\quad+ \sum_{m=1}^{2} \frac{\partial \hat v}{\partial \hat x_m}(G(x))\frac{\partial^2G_m}{\partial x_j\partial x_k}(x). \label{eq:pushforward-derivative2}
\end{align}
Equivalently,
\begin{align*}
\displaystyle
\partial_{x_k}v(x) = D\hat v(G(x)) \partial_{x_k}G(x),
\end{align*}
and
\begin{align*}
\displaystyle
\partial_{x_j}\partial_{x_k}v(x)
= D^2\hat v(G(x)) [\partial_{x_j}G(x),\partial_{x_k}G(x)] + D\hat v(G(x)) \partial_{x_j}\partial_{x_k}G(x).
\end{align*}
\end{lemma}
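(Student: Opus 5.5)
The plan is to mirror the proof of Lemma~\ref{lem:derivatives-pullback} exactly, with the roles of $F$ and $G$ interchanged. The setting is the same: $\hat v\in\mathcal C^2(\widehat T)$ and, by the standing assumption that $\Psi$ is a $\mathcal C^2$-diffeomorphism, $G=\Phi^{-1}\circ\Psi^{-1}$ is $\mathcal C^2$ on $K$. By Lemma~\ref{lem:derivatives-inverse-element-map}, its first and second derivatives are explicitly available and continuous. Hence $v=\hat v\circ G$ is $\mathcal C^2$ and the chain rule applies.

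First, write $v(x)=\hat v(G_1(x),G_2(x))$ and apply the chain rule in the variable $x_k$. This gives \eqref{eq:pushforward-derivative1} immediately:
\begin{align*}
\partial_{x_k}v(x)=\sum_{m=1}^2 \partial_{\hat x_m}\hat v(G(x))\,\partial_{x_k}G_m(x).
\end{align*}

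Second, differentiate this identity with respect to $x_j$. For each $m$, the product rule produces two terms.
\begin{itemize}
\item The first term is $\partial_{x_j}\bigl[\partial_{\hat x_m}\hat v(G(x))\bigr]\,\partial_{x_k}G_m(x)$. Applying the chain rule to the inner factor (legitimate since $\partial_{\hat x_m}\hat v\in\mathcal C^1$) gives $\sum_{\ell}\partial^2_{\hat x_\ell\hat x_m}\hat v(G(x))\,\partial_{x_j}G_\ell(x)$.
\item The second term is $\partial_{\hat x_m}\hat v(G(x))\,\partial^2_{x_j x_k}G_m(x)$.
\end{itemize}
Summing over $m$ yields \eqref{eq:pushforward-derivative2}.

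The multilinear formulations then follow by reading the sums as $D\hat v(G(x))$ applied to the vector $\partial_{x_k}G(x)$, and as the bilinear form $D^2\hat v(G(x))$ evaluated on the pair $(\partial_{x_j}G,\partial_{x_k}G)$.

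There is no genuine obstacle. The only point requiring care is regularity up to the boundary. Here $K$ and $\widehat T$ are closed sets, and $\mathcal C^2(\widehat T)$ is understood as functions whose derivatives up to order two extend continuously to the closure. Since $G$ extends as a $\mathcal C^2$ map on the neighbourhood $\Psi(U_T)$ of $K$, the chain rule holds in the interior. Both sides of the identities are continuous up to the boundary, so the identities extend to all of $K$. The symmetry of $D^2\hat v$ is not even needed, since the index order in \eqref{eq:pushforward-derivative2} comes out directly as stated.
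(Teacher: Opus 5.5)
Your proposal is correct and matches the paper's proof essentially line by line. The first-derivative formula is the chain rule; the second comes from differentiating it in $x_j$ by the product rule, applying the chain rule again to $\partial_{\hat x_m}\hat v(G(x))$ and summing over $m$, and the multilinear forms are a rewriting. Your regularity remark is an addition not found in the paper, but it is a harmless and sound justification: $G$ is $\mathcal C^2$ on the open set $\Psi(U_T)\supset K$, the identities hold in the interior, and they extend to $K$ by continuity.
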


\begin{proof}
The proof is identical in structure to the proof of Lemma~\ref{lem:derivatives-pullback}. Because
\begin{align*}
\displaystyle
v(x) = \hat v(G(x)), \quad G=(G_1,G_2)^\top,
\end{align*}
the chain rule gives
\begin{align*}
\displaystyle
\frac{\partial v}{\partial x_k}(x)
= \sum_{m=1}^{2} \frac{\partial \hat v}{\partial \hat x_m}(G(x))\frac{\partial G_m}{\partial x_k}(x),
\end{align*}
which proves \eqref{eq:pushforward-derivative1}.

Differentiating this identity with respect to $x_j$, we obtain
\begin{align*}
\displaystyle
\frac{\partial}{\partial x_j}
\left[
        \frac{\partial \hat v}
             {\partial \hat x_m}(G(x))
        \frac{\partial G_m}
             {\partial x_k}(x)
\right]
&=
\frac{\partial}{\partial x_j}
\left[
        \frac{\partial \hat v}
             {\partial \hat x_m}(G(x))
\right]
        \frac{\partial G_m}
             {\partial x_k}(x)
\\
&\quad+
        \frac{\partial \hat v}
             {\partial \hat x_m}(G(x))
        \frac{\partial^2G_m}
             {\partial x_j\partial x_k}(x).
\end{align*}
The first factor is again differentiated by the chain rule:
\begin{align*}
\displaystyle
 \frac{\partial}{\partial x_j}
        \left[
        \frac{\partial \hat v}
             {\partial \hat x_m}(G(x))
        \right]
        =
        \sum_{\ell=1}^{2}
        \frac{\partial^2 \hat v}
             {\partial \hat x_\ell\partial \hat x_m}(G(x))
        \frac{\partial G_\ell}
             {\partial x_j}(x).
\end{align*}
Substituting this identity into the preceding formula and summing over $m=1,2$, we obtain \eqref{eq:pushforward-derivative2}. The multilinear form is just a compact rewriting of the same componentwise identity.
\end{proof}

\begin{remark}
The pullback and push-forward formulae are parallel.  In the pullback formula, the curved geometry enters through the derivatives of $F$. In the push-forward formula, it enters through the derivatives of $G=F^{-1}$.  In particular, the terms involving $D^2F$ and $D^2G$ vanish in the affine case and represent the additional contribution caused by the exact curved geometry.
\end{remark}

\begin{remark}
Because $F=\Psi\circ\Phi$, one has
\begin{align*}
\displaystyle
G=F^{-1}=\Phi^{-1}\circ\Psi^{-1}.
\end{align*}
Thus, if $\Phi(\hat x)=A\hat x+b_T$, then
\begin{align*}
\displaystyle
DG(x)=A^{-1}D\Psi^{-1}(x).
\end{align*}
The higher derivatives of $G$ are governed by the derivatives of $\Psi^{-1}$, because $\Phi^{-1}$ is affine.
\end{remark}

\subsection{Exact curved triangulations}
\label{subsec:exact-curved-triangulations}
Let $\Omega\subset\mathbb R^2$ be a bounded domain with piecewise smooth boundary.  An exact curved triangulation of $\Omega$ is a finite family
\begin{align*}
\displaystyle
\mathbb{K}_h = \{ K \}
\end{align*}
of exact curved triangles such that the following conditions hold. First, the elements cover the physical domain exactly:
\begin{align}
\displaystyle
\overline{\Omega}
= \bigcup_{K \in \mathbb K_h} K.
        \label{eq:exact-covering}
\end{align}
Second, distinct elements have disjoint interiors:
\begin{align}
\displaystyle
 \operatorname{int}K_1\cap\operatorname{int}K_2=\emptyset
        \quad
        \text{if } K_1,K_2 \in \mathbb K_h,\ K_1 \ne K_2.
        \label{eq:disjoint-interiors}
\end{align}
Third, if two distinct elements intersect, then their intersection is either empty, a common vertex, or a common curved edge.  In particular, if $K_1\cap K_2$ is one-dimensional, then it is represented by the same geometric curve from both sides. 

For each $K \in \mathbb K_h$, we fix an affine core $T$ and an element map
\begin{align*}
\displaystyle
F_K := \Psi_K \circ \Phi_T: \widehat T \to K.
\end{align*}
Here, $\Phi_T:\widehat T\to T$ is the affine map and $\Psi_K:T\to K$ is the curved correction. When no confusion can arise, we omit the subscripts and simply write $F = \Psi \circ \Phi$.

For each $K \in \mathbb K_h$, let $T_K$ be the affine core of $K$, so that
\begin{align*}
\displaystyle
K=\Psi_K(T_K), \quad T_K=\Psi_K^{-1}(K).
\end{align*}
The associated family of affine cores is denoted by
\begin{align*}
\displaystyle
\mathbb T_h
:= \{T_K : K \in \mathbb K_h\} 
= \{\Psi_K^{-1}(K) : K \in \mathbb K_h\}.
\end{align*}
The family $\mathbb K_h$ is the exact curved triangulation of the physical domain $\Omega$.  By contrast, $\mathbb T_h$ is only the collection of affine cores associated with the curved elements; it is not regarded as a polygonal approximation of $\Omega$. We also define the global mesh size as
\begin{align*}
\displaystyle
h := \max_{K \in \mathbb K_h} h_{T_K},
\end{align*}
where $h_{T_K}$ denotes the length of the longest edge of the affine core $T_K$.  Thus, the mesh size is measured on the associated affine cores, not directly on the curved physical elements.

Let $\mathcal E_h^\partial$ be the set of boundary edges of $\mathbb K_h$, namely the curved edges which are not shared by two distinct elements.  We impose the exact boundary condition
\begin{align}
\displaystyle
\partial \Omega
= \bigcup_{e \in \mathcal E_h^\partial} e. \label{eq:exact-boundary}
\end{align}
Thus, the boundary of the mesh is not a polygonal approximation of $\partial\Omega$; it coincides with the physical boundary itself.

\begin{remark}
The exactness in \eqref{eq:exact-covering} and \eqref{eq:exact-boundary} is geometric.  It does not mean that the element maps are polynomial, nor that the geometry is described by an isoparametric ansatz.  The curved maps $F_K=\Psi_K\circ\Phi_T$ are regarded as part of the mesh data.
\end{remark}

\subsection{Lagrange finite element spaces}
\label{subsec:lagrange-fe-spaces}
Let $k \geq 1$ be an integer.  On the reference triangle $\widehat T$, we denote by $\mathbb{P}^k(\widehat T)$ the space of polynomials of total degree at most $k$. 
Let
\begin{align*}
\displaystyle
\widehat{\mathcal N}_k = \{\hat a_i\}_{i=1}^{N_k}
\end{align*}
be the standard set of Lagrange nodes of degree $k$ on $\widehat T$, where
\begin{align*}
\displaystyle
N_k=\frac{(k+1)(k+2)}{2}.
\end{align*}
Equivalently, these nodes may be written in barycentric coordinates as
\begin{align*}
\displaystyle
\hat a_{\alpha}
=
\frac{\alpha_1}{k}\hat p_1
+
\frac{\alpha_2}{k}\hat p_2
 +
\frac{\alpha_3}{k}\hat p_3,
\quad
\alpha=(\alpha_1,\alpha_2,\alpha_3)\in\mathbb N_0^3,
\quad
\alpha_1+\alpha_2+\alpha_3=k .
\end{align*}
Here, $\hat p_1,\hat p_2,\hat p_3$ are the vertices of $\widehat T$, see Section \ref{ref=affine=subsec}.

For each exact curved triangle $K\in\mathbb K_h$, let $F_K:\widehat T\to K$ be its element map.  The Lagrange nodes on $K$ are defined by
\begin{align*}
\displaystyle
a_{K,i}:=F_K(\hat a_i), \quad i=1,\ldots,N_k.
\end{align*}
Thus, the nodes on a boundary edge of $K$ lie on the exact curved boundary whenever the corresponding reference nodes lie on the associated reference edge.

The local exact-curved Lagrange space of degree $k$ on $K$ is defined as
\begin{align}
\displaystyle
V_{L}^k(K)
:= \left\{ v: K \to \mathbb R: \ v = \hat v \circ F_K^{-1}\ \forall \hat v \in \mathbb{P}^k(\widehat T) \right\}. \label{eq:local-curved-lagrange-space}
\end{align}
Equivalently,
\begin{align*}
\displaystyle
v \in V_{L}^k(K)
\quad\Longleftrightarrow\quad
v \circ F_K \in \mathbb{P}^k(\widehat T).
\end{align*}

\begin{remark}
The notation $V_{L}^k(K)$ does not mean the restriction of ordinary polynomials in the physical variables $x=(x_1,x_2)^{\top}$ to $K$.  It denotes the push-forward of $\mathbb{P}^k(\widehat T)$ by the curved element map $F_K$.  Therefore, if $F_K$ is non-affine, functions in $V_{L}^k(K)$ are generally not polynomials in the physical variables.
\end{remark}

Let $I_{\widehat{T}}^k : \mathcal{C}^0(\widehat T) \to \mathbb{P}^k(\widehat T)$ be the standard Lagrange interpolation operator on the reference triangle, characterised as
\begin{align*}
\displaystyle
(I_{\widehat{T}}^k \hat{v}) (\widehat a_i) = \hat v(\hat a_i), \quad i=1,\ldots,N_k \quad \forall \hat{v} \in \mathcal{C}^0(\widehat T).
\end{align*}
For a continuous function $v  \in \mathcal{C}^0(K)$, we define the local interpolation
operator $I_{L}^k : \mathcal{C}^0(K) \to V_L^k(K)$ as
\begin{align}
\displaystyle
I_L^k v
:= \left(I_{\widehat{T}}^k(v\circ F_K)\right) \circ F_K^{-1}. \label{eq:local-interpolation-operator}
\end{align}
Then,
\begin{align*}
\displaystyle
(I_L^k v)(a_{K,i}) = v(a_{K,i}), \quad i=1,\ldots,N_k .
\end{align*}
Thus, $I_L^k$ is the nodal Lagrange interpolation operator on the exact curved triangle $K$.

The global conforming Lagrange finite element space is defined as
\begin{align}
\displaystyle
V_h^k
:=
\left\{
v_h \in \mathcal{C}^0(\overline{\Omega}): \ v_h|_K \in V_{L}^k(K) \quad \forall K\in\mathbb K_h \right\}. \label{eq:global-lagrange-space}
\end{align}
For homogeneous Dirichlet boundary conditions, we set
\begin{align}
\displaystyle
V_{h0}^k
:= V_h^k \cap H_0^1(\Omega). \label{eq:global-lagrange-space-zero}
\end{align}
Because the boundary edges of $\mathbb K_h$ lie on $\partial\Omega$, the condition $v_h=0$ on $\partial\Omega$ is imposed on the exact physical boundary, not on a polygonal approximation.

\begin{remark}
For a boundary edge, the trace of $v_h|_K$ is the push-forward of a one-dimensional polynomial of degree at most $k$ on the corresponding reference edge.  Therefore, if all Lagrange nodes on that boundary edge are prescribed to be zero, then the whole curved-edge trace is zero.  This is how the homogeneous Dirichlet condition is imposed strongly on the exact boundary.
\end{remark}

\subsection{Additional notation and geometric assumptions}
\label{subsec:geometric-assumptions}
In this subsection, we state the geometric assumptions used in the interpolation analysis. The key point is that the anisotropic geometric condition is imposed on the affine cores $T_K \in \mathbb T_h$, not directly on the curved elements $K \in \mathbb K_h$.  The curved correction $\Psi_K$ is controlled separately.
Let $K \in \mathbb K_h$, and let $T_K\in\mathbb T_h$ be its affine core.  We denote the vertices of $T_K$ by $p_1$, $p_2$ and $p_3$ defined in Section \ref{ref=affine=subsec}. As in Section~\ref{ref=affine=subsec}, we assume that the edge $\overline{{p}_2 {p}_3}$ is the longest edge of $T_K$.  We set
\begin{align*}
\displaystyle
h_{T_K} := |p_2-p_3|,
\quad
h_{K,1} := |p_1-p_2|,
\quad
h_{K,2} := |p_1-p_3|,
\end{align*}
and we assume, without loss of generality, that
\begin{align*}
\displaystyle
h_{K,2} \leq  h_{K,1}.
\end{align*}
Then, $h_{K,1} \simeq h_{T_K}$. 

We define the unit vectors
\begin{align*}
\displaystyle
r_1:=\frac{p_2-p_1}{|p_2-p_1|}, \quad r_2:=\frac{p_3-p_1}{|p_3-p_1|}.
\end{align*}

For $x\in K$, let
\begin{align*}
\displaystyle
y = \Psi_K^{-1}(x) \in T_K.
\end{align*}
The affine-core direction $r_i$ is transported to a vector field on $K$ by
\begin{align}
\displaystyle
\tau_{K,i}(x)
:= D\Psi_K(y) r_i
= D\Psi_K(\Psi_K^{-1}(x)) r_i, \quad i=1,2 . \label{eq:transported-direction}
\end{align}
We call $\tau_{K,i}$ the transported direction associated with $r_i$. It is not necessarily a unit vector.

For a sufficiently smooth function $w$, we define the directional derivative in the direction $r_i$, $i=1,2$, as
\begin{align*}
\displaystyle
\frac{\partial w}{\partial r_i}
:= r_i \cdot \nabla_y w
= \sum_{\alpha=1}^{2}(r_i)_\alpha \frac{\partial w}{\partial y_\alpha}.
\end{align*}
Here, $y=(y_1,y_2)^\top$ denotes the coordinate on the affine core $T_K$. Therefore, for a sufficiently smooth function $v$ on $K$, we define the directional derivative along $\tau_{K,i}$ as
\begin{align*}
\displaystyle
\frac{\partial v}{\partial \tau_{K,i}} := \tau_{K,i}\cdot\nabla_x v.
\end{align*}
Furthermore, for $i,j=1,2$, we also define the curvature vector field
\begin{align}
\displaystyle
b_{K,ij}(x)
:= D^2\Psi_K(\Psi_K^{-1}(x))[r_i,r_j], \quad x \in K. \label{eq:curvature-vector-field}
\end{align}

\begin{remark}
Let $w=v\circ\Psi_K$.  Then, the first directional derivative of $w$ on the affine core is transformed as
\begin{align*}
\displaystyle
\frac{\partial w}{\partial r_i}(y) = \frac{\partial v}{\partial\tau_{K,i}}(\Psi_K(y)).
\end{align*}
Furthermore, for $i,j=1,2$,
\begin{align*}
\displaystyle
\frac{\partial^2 w}{\partial r_i\partial r_j}(y)
&=
D^2v(\Psi_K(y)) [D\Psi_K(y)r_i,D\Psi_K(y)r_j]\\
&\quad+
        \nabla_x v(\Psi_K(y)) \cdot D^2\Psi_K(y)[r_i,r_j].
\end{align*}
Equivalently, with $x=\Psi_K(y)$,
\begin{align*}
\displaystyle
\frac{\partial^2 w}{\partial r_i\partial r_j}(\Psi_K^{-1}(x))
&= D^2v(x)[\tau_{K,i}(x),\tau_{K,j}(x)] + \nabla_x v(x)\cdot b_{K,ij}(x).
\end{align*}
Thus, the second term represents the curvature contribution of the exact curved map.
\end{remark}

We define the affine geometric parameter associated with the affine core $T_K$ as
\begin{align}
\displaystyle
 H_{T_K} := \frac{h_{K,1} h_{K,2}}{|T_K|_2} h_{T_K}, \label{eq:HTK-definition}
\end{align}
where  $|T_K|_2$ denotes the two-dimensional Hausdorff measure of $T_K$.

\begin{assume}[Affine semi-regularity]
\label{ass:affine-semi-regularity}
There exists a constant $\gamma_0>0$, independent of $h$, such that
\begin{align}
\displaystyle
\frac{H_{T_K}}{h_{T_K}} \leq \gamma_0 \quad \forall K \in \mathbb K_h. \label{eq:affine-semi-regularity}
\end{align}
Related geometric conditions for anisotropic simplices can be found in \cite{IshKobTsu21a,IshKobTsu23,IshKobSuzTsu21}.
\end{assume}

This condition is imposed on the affine cores $T_K$, not on the curved triangles $K$. This is consistent with the factorisation
\begin{align*}
\displaystyle
F_K = \Psi_K \circ \Phi_{T_K},
\end{align*}
because the affine core carries the anisotropic scaling, whereas $\Psi_K$ represents the curved correction.

\begin{assume}[Uniform control of the curved correction]
\label{ass:curved-correction}
There exist constants $C_\Psi^{(1)},C_\Psi^{(2)}>0$, independent of $K$, such that
\begin{align*}
\displaystyle
\|D\Psi_K\|_{L^\infty(T_K)} + \|D\Psi_K^{-1}\|_{L^\infty(K)} \leq C_\Psi^{(1)},
\end{align*}
and
\begin{align*}
\displaystyle
\|D^2\Psi_K\|_{L^\infty(T_K)} + \|D^2\Psi_K^{-1}\|_{L^\infty(K)} \leq C_\Psi^{(2)}.
\end{align*}
\end{assume}

\begin{definition}[Exact-curved semi-regular family]
\label{def:exact-curved-semi-regular-family}
A family of exact curved triangulations $\{\mathbb K_h\}_h$ is called exact-curved semi-regular if the associated affine cores $\{\mathbb T_h\}_h$ satisfy Assumption~\ref{ass:affine-semi-regularity} and the curved corrections satisfy Assumption~\ref{ass:curved-correction}.
\end{definition}

Thus, the shape restriction and the curvature restriction are imposed on different parts of the element map. The condition \eqref{eq:affine-semi-regularity} controls the affine anisotropy of $T_K$, whereas Assumption~\ref{ass:curved-correction} controls the deformation from $T_K$ to $K$.

\section{Interpolation on exact curved elements}
\label{sec:interpolation}
The argument in this section is inspired by the exact curved-element interpolation framework of Bernardi \cite{Ber89}. However, we do not estimate the curved element map as a single perturbation of an affine map.  Instead, we use the factorisation
\begin{align*}
\displaystyle
F_K=\Psi_K\circ\Phi_{T_K},
\end{align*}
which separates the affine core $T_K$ from the curved correction $\Psi_K$.  This allows us to apply the anisotropic interpolation estimate on the affine core and then transfer the result to the exact curved element by the map $\Psi_K$.

Let $K \in \mathbb K_h$, and let $T_K \in \mathbb T_h$ be its affine core.  We recall
\begin{align*}
\displaystyle
F_K=\Psi_K\circ\Phi_{T_K}:\widehat T \to K .
\end{align*}
For a scalar function $v$ on $K$, we introduce
\begin{align*}
\displaystyle
w := v\circ \Psi_K \quad \text{on }T_K.
\end{align*}
Then, 
\begin{align*}
\displaystyle
v\circ F_K
= (v\circ\Psi_K)\circ\Phi_{T_K} = w\circ\Phi_{T_K}.
\end{align*}
Therefore, by the definition of the exact-curved Lagrange interpolation operator,
\begin{align}
\displaystyle
I_L^k v = (I_{T_K}^k w)\circ\Psi_K^{-1}, \label{eq:interpolation-reduction}
\end{align}
where $I_{T_K}^k$ denotes the usual Lagrange interpolation operator on the affine core $T_K$.  Consequently,
\begin{align}
\displaystyle
v-I_L^k v
= (w-I_{T_K}^k w)\circ\Psi_K^{-1}. \label{eq:error-reduction}
\end{align}

\subsection{Scaling argument}
\label{subsec:scaling-argument}
\begin{lemma}[Transfer of \(L^2\)- and \(H^1\)-norms]
\label{lem:L2-H1-transfer}
Let $v \in H^1(K)$ with $w := v \circ \Psi_K$. Then,
\begin{align}
\displaystyle
\|v\|_{L^2(K)}
\leq C_\Psi^{(3)} \|w\|_{L^2(T_K)}, \label{eq:L2-transfer}
\end{align}
and
\begin{align}
\displaystyle
|v|_{H^1(K)}
\leq C_\Psi^{(4)} |w|_{H^1(T_K)}. \label{eq:H1-transfer}
\end{align}
The constants $C_\Psi^{(3)}$ and $C_\Psi^{(4)}$ depend only on the uniform bounds for $\Psi_K$ and $\Psi_K^{-1}$ in Assumption~\ref{ass:curved-correction}.
\end{lemma}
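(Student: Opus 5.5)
The plan is a change of variables $x=\Psi_K(y)$, $y\in T_K$, combined with two pointwise bounds on the Jacobians. These bounds come from Assumption~\ref{ass:curved-correction}. Since $\Psi_K:T_K\to K$ is a $\mathcal C^2$-diffeomorphism with $\det D\Psi_K>0$ on $T_K$, the change-of-variables formula gives
\begin{align*}
\int_K g(x)\,dx = \int_{T_K} g(\Psi_K(y))\,\det D\Psi_K(y)\,dy
\end{align*}
for every integrable $g$ on $K$. We only need an upper bound on the Jacobian determinant. In two dimensions $|\det M|\le |M|^2$ for the spectral (or Frobenius) norm, so $\det D\Psi_K(y)\le (C_\Psi^{(1)})^2$ for all $y\in T_K$. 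A lower bound, via $\det D\Psi_K^{-1}$, is not required for the stated direction of the inequalities.

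For \eqref{eq:L2-transfer}, apply the formula with $g=v^2$. Since $v\circ\Psi_K=w$, this gives $\|v\|_{L^2(K)}^2\le (C_\Psi^{(1)})^2\|w\|_{L^2(T_K)}^2$, so we may take $C_\Psi^{(3)}=C_\Psi^{(1)}$.

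For \eqref{eq:H1-transfer}, write $v=w\circ\Psi_K^{-1}$. The chain rule gives $\nabla_x v(x) = (D\Psi_K^{-1}(x))^\top \nabla_y w(\Psi_K^{-1}(x))$, which is the same computation as in Lemma~\ref{lem:derivatives-pushforward} with $G$ replaced by $\Psi_K^{-1}$. Hence $|\nabla_x v(x)|\le C_\Psi^{(1)}\,|\nabla_y w(\Psi_K^{-1}(x))|$. Squaring, integrating over $K$, and changing variables as above yields
\begin{align*}
|v|_{H^1(K)}^2 \le (C_\Psi^{(1)})^2\int_{T_K}|\nabla_y w(y)|^2\det D\Psi_K(y)\,dy \le (C_\Psi^{(1)})^4 |w|_{H^1(T_K)}^2,
\end{align*}
so we may take $C_\Psi^{(4)}=(C_\Psi^{(1)})^2$. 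Both constants depend only on the bounds of Assumption~\ref{ass:curved-correction}. In particular they do not depend on $h_{K,1}$, $h_{K,2}$ or the shape of $T_K$, because no affine scaling enters.

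The only technical point is justifying the chain rule and the change of variables for Sobolev functions rather than smooth ones. Composition with a bi-Lipschitz $\mathcal C^1$ diffeomorphism maps $H^1$ to $H^1$, and the chain rule holds almost everywhere. Alternatively, I would prove both estimates for $v\in\mathcal C^1(\overline K)$ and extend by density, which applies since $K$ is a Lipschitz domain as the image of a triangle under a $\mathcal C^2$-diffeomorphism. I expect this density/regularity step to be the main obstacle, but it is routine.
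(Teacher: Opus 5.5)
Your proposal is correct and is essentially the paper's proof: a change of variables $x=\Psi_K(y)$ with an upper bound on $\det D\Psi_K$ from Assumption~\ref{ass:curved-correction}, the chain rule $\nabla_x v=(D\Psi_K^{-1})^{\top}\nabla_y w$ bounded via $\|D\Psi_K^{-1}\|_{L^\infty(K)}$, and density of $\mathcal C^1$ functions. Your explicit constants $C_\Psi^{(3)}=C_\Psi^{(1)}$ and $C_\Psi^{(4)}=(C_\Psi^{(1)})^2$ are a small improvement over the paper's implicit ones; note, however, that if you pass to the limit by approximating $v$ on $K$, the convergence of $w_n=v_n\circ\Psi_K$ in $H^1(T_K)$ does use the reverse bound, i.e.\ a lower bound on $\det D\Psi_K$, which the assumption also supplies.
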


\begin{proof}
Because the space $\mathcal{C}^{1}(K)$ is dense in the space ${H}^{1}(K)$, we show \eqref{eq:L2-transfer} and \eqref{eq:H1-transfer} for $v \in \mathcal{C}^1(K)$ with $w = v \circ \Psi_K$. 

The change of variables $x=\Psi_K(y)$ gives
\begin{align*}
\displaystyle
\|v\|_{L^2(K)}^2
= \int_K | w(\Psi_K^{-1}(x))|^2 dx
= \int_{T_K}|w(y)|^2 |\det D\Psi_K(y)| dy.
\end{align*}
By Assumption~\ref{ass:curved-correction}, $|\det D\Psi_K|$ is uniformly bounded from above.  Therefore,
\begin{align*}
\displaystyle
\|v\|_{L^2(K)} \leq C_\Psi^{(3)} \|w\|_{L^2(T_K)},
\end{align*}
which is the target inequality.

We next consider the $H^1$-seminorm.  The chain rule gives, with $y=\Psi_K^{-1}(x)$,
\begin{align*}
\displaystyle
\nabla_x v(x)
= D\Psi_K^{-1}(x)^{\top}\nabla_y w(y).
\end{align*}
Using again the change of variables $x=\Psi_K(y)$, we obtain
\begin{align*}
\displaystyle
|v|_{H^1(K)}^2
&= \int_K |\nabla_x v(x)|^2 dx \\
&\leq \|D\Psi_K^{-1}\|_{L^\infty(K)}^2 \int_{T_K} |\nabla_y w(y)|^2 |\det D\Psi_K(y)| dy,
\end{align*}
which leads to
\begin{align*}
\displaystyle
|v|_{H^1(K)}
&\leq C_\Psi^{(4)} |w|_{H^1(T_K)}.
\end{align*}
\end{proof}

\begin{lemma}[Transfer of directional derivatives]
\label{lem:directional-derivative-transfer}
Let $v \in H^2(K)$ with $w := v \circ \Psi_K$. Then, for $i=1,2$,
\begin{align}
\displaystyle
\frac{\partial w}{\partial r_i}(y)
= \frac{\partial v}{\partial\tau_{K,i}}(\Psi_K(y)), \quad y \in T_K. \label{eq:first-directional-transfer}
\end{align}
Consequently,
\begin{align}
\displaystyle
\left| \frac{\partial w}{\partial r_i} \right|_{H^1(T_K)}
\leq C_\Psi^{(5)} \left| \frac{\partial v}{\partial \tau_{K,i}} \right|_{H^1(K)}. \label{eq:directional-H1-scaling}
\end{align}
Furthermore, for $i,j=1,2$,
\begin{align}
\displaystyle
\frac{\partial^2 w}{\partial r_i\partial r_j}(y)
&= D^2v(\Psi_K(y)) [D\Psi_K(y)r_i,D\Psi_K(y)r_j] \notag\\
&\quad + \nabla_xv(\Psi_K(y))\cdot D^2\Psi_K(y)[r_i,r_j]. \label{eq:second-directional-transfer}
\end{align}
Equivalently, with $x=\Psi_K(y)$,
\begin{align}
\displaystyle
\frac{\partial^2 w}{\partial r_i\partial r_j}(\Psi_K^{-1}(x))
&= D^2v(x)[\tau_{K,i}(x),\tau_{K,j}(x)] \notag\\
&\quad + \nabla_x v(x)\cdot b_{K,ij}(x). \label{eq:second-directional-transfer-K}
\end{align}
Consequently,
\begin{align}
\displaystyle
\left\| \frac{\partial^2 w} {\partial r_i\partial r_j} \right\|_{L^2(T_K)}
&\leq C_\Psi^{(6)} \left( \left\| D^2v[\tau_{K,i},\tau_{K,j}] \right\|_{L^2(K)} +  \left\| \nabla_x v\cdot b_{K,ij} \right\|_{L^2(K)} \right). \label{eq:second-directional-L2-transfer}
\end{align}
The constants $C_\Psi^{(5)}$ and $C_\Psi^{(6)}$ depend only on the uniform bounds for $\Psi_K$ and $\Psi_K^{-1}$ in Assumption~\ref{ass:curved-correction}.
\end{lemma}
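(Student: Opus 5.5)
We argue by density, as in Lemma~\ref{lem:L2-H1-transfer}: first for $v \in \mathcal{C}^2(K)$, then extend to $H^2(K)$, since every identity below is linear in $v$ and its first two derivatives and every bound involves only $L^2$ norms of such derivatives with $L^\infty$ coefficients. For smooth $v$, the chain rule applied to $w = v\circ\Psi_K$ gives $\nabla_y w(y) = D\Psi_K(y)^\top \nabla_x v(\Psi_K(y))$. Taking the inner product with $r_i$ yields
\begin{align*}
r_i\cdot\nabla_y w(y) = \big(D\Psi_K(y) r_i\big)\cdot\nabla_x v(\Psi_K(y)) = \tau_{K,i}(\Psi_K(y))\cdot \nabla_x v(\Psi_K(y)),
\end{align*}
which is \eqref{eq:first-directional-transfer}; note that $\tau_{K,i}(\Psi_K(y)) = D\Psi_K(y)r_i$ by \eqref{eq:transported-direction}.

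For \eqref{eq:directional-H1-scaling}, set $g := \partial v/\partial\tau_{K,i}$, a scalar function on $K$. By \eqref{eq:first-directional-transfer}, $\partial w/\partial r_i = g\circ\Psi_K$. We then apply the $H^1$-transfer in the reverse direction, i.e.\ Lemma~\ref{lem:L2-H1-transfer} with the roles of $\Psi_K$ and $\Psi_K^{-1}$ exchanged. Concretely, $\nabla_y(g\circ\Psi_K) = D\Psi_K^\top (\nabla_x g)\circ\Psi_K$, and the change of variables $y=\Psi_K^{-1}(x)$ produces the Jacobian $|\det D\Psi_K^{-1}(x)|$. Both factors are bounded by Assumption~\ref{ass:curved-correction}, which gives $C_\Psi^{(5)}$ in terms of $C_\Psi^{(1)}$. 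This requires $g\in H^1(K)$, which holds because $\tau_{K,i}\in \mathcal{C}^1$ with bounded derivative, again by Assumption~\ref{ass:curved-correction}.

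For \eqref{eq:second-directional-transfer}, we differentiate the identity $r_j\cdot\nabla_y w(y) = \nabla_x v(\Psi_K(y))\cdot D\Psi_K(y) r_j$ in the direction $r_i$ using the product rule. Differentiating $\nabla_x v(\Psi_K(y))$ gives $D^2 v(\Psi_K(y))[D\Psi_K(y)r_i,\cdot\,]$. Differentiating the factor $D\Psi_K(y)r_j$ gives $D^2\Psi_K(y)[r_i,r_j]$. This is exactly the second-derivative part of Lemma~\ref{lem:derivatives-pullback}, carried out with the constant directions $r_i,r_j$ in place of the coordinate directions, and it uses the symmetry of $D^2 v$ and of $D^2\Psi_K$ ($\Psi_K\in\mathcal{C}^2$). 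Substituting $y=\Psi_K^{-1}(x)$ and recalling \eqref{eq:transported-direction} and \eqref{eq:curvature-vector-field} gives \eqref{eq:second-directional-transfer-K}.

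Finally, for \eqref{eq:second-directional-L2-transfer} we take the $L^2(T_K)$ norm of \eqref{eq:second-directional-transfer}, apply the triangle inequality, and change variables $y=\Psi_K^{-1}(x)$ in each term. This introduces $|\det D\Psi_K^{-1}|^{1/2}\le C$, and the result follows with $C_\Psi^{(6)}$ depending only on $C_\Psi^{(1)}$.

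The main obstacle is the density step for $H^2$ functions. The right-hand side terms must make sense and depend continuously on $v$ in $H^2(K)$: $D^2 v[\tau,\tau]$ has $L^\infty$ coefficients, so it depends continuously on $v$ in $L^2$, and likewise $\nabla v\cdot b$. We also need $\mathcal{C}^2(\overline K)$ to be dense in $H^2(K)$. This holds because $K=\Psi_K(T_K)$ is a $\mathcal{C}^2$-image of a Lipschitz triangle and is therefore Lipschitz. Once this is in place, the identities pass to the limit in $L^2$ and the estimates follow.
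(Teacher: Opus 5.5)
Your proposal is correct and follows essentially the same route as the paper. Both reduce to $v\in\mathcal{C}^2$ by density and use the chain rule $\nabla_y w = D\Psi_K^\top(\nabla_x v)\circ\Psi_K$ to get $\partial w/\partial r_i = g_i\circ\Psi_K$. Both then bound $|g_i\circ\Psi_K|_{H^1(T_K)}$ through $\|D\Psi_K\|_{L^\infty}$ and the Jacobian $|\det D\Psi_K^{-1}|$, obtain the second-derivative formula by the product and chain rules, and conclude with a change of variables.

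You also explain why $g_i\in H^1(K)$ (namely $\tau_{K,i}\in W^{1,\infty}$) and why $\mathcal{C}^2$ functions are dense in $H^2(K)$, which the paper leaves implicit. For density, a simpler justification than invoking the Lipschitz character of $K$ is to pull $v$ back to $T_K$, approximate there, and push forward by $\Psi_K^{-1}$.
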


\begin{proof}
Because the space $\mathcal{C}^{2}(K)$ is dense in the space ${H}^{2}(K)$, we show \eqref{eq:first-directional-transfer}--\eqref{eq:second-directional-L2-transfer} for $v \in \mathcal{C}^2(K)$ with $w = v \circ \Psi_K$. 

We set
\begin{align*}
\displaystyle
g_i: = \frac{\partial v}{\partial\tau_{K,i}}
= \tau_{K,i}\cdot\nabla_x v \quad\text{on }K.
\end{align*}
We first show that
\begin{align*}
\displaystyle
\frac{\partial w}{\partial r_i}=g_i\circ\Psi_K.
\end{align*}
Indeed, because $w=v\circ\Psi_K$, the chain rule gives
\begin{align*}
\displaystyle
\nabla_y w(y)
= D\Psi_K(y)^\top \nabla_xv(\Psi_K(y)).
\end{align*}
Therefore,
\begin{align*}
\displaystyle
\frac{\partial w}{\partial r_i}(y)
&= r_i \cdot \nabla_y w(y)  \\
&= r_i\cdot D\Psi_K(y)^\top\nabla_xv(\Psi_K(y)) \\
&= D\Psi_K(y)r_i\cdot\nabla_xv(\Psi_K(y)).
\end{align*}
By the definition of the transported direction,
\begin{align*}
\displaystyle
\tau_{K,i}(\Psi_K(y))=D\Psi_K(y)r_i .
\end{align*}
Therefore,
\begin{align*}
\displaystyle
\frac{\partial w}{\partial r_i}(y)
= \tau_{K,i}(\Psi_K(y))\cdot\nabla_xv(\Psi_K(y))
= g_i(\Psi_K(y)),
\end{align*}
which proves \eqref{eq:first-directional-transfer}.

It remains to estimate the $H^1$-seminorm.  Because
\begin{align*}
\displaystyle
\frac{\partial w}{\partial r_i}=g_i\circ\Psi_K,
\end{align*}
we have
\begin{align*}
\displaystyle
\nabla_y(g_i\circ\Psi_K)(y) = D\Psi_K(y)^\top\nabla_x g_i(\Psi_K(y)).
\end{align*}
Using the change of variables $x=\Psi_K(y)$, we obtain
\begin{align*}
\displaystyle
\left| \frac{\partial w}{\partial r_i} \right|_{H^1(T_K)}^2
&= \int_{T_K} |\nabla_y(g_i\circ\Psi_K)(y)|^2 dy  \\
&\leq \|D\Psi_K\|_{L^\infty(T_K)}^2 \int_{T_K} |\nabla_x g_i(\Psi_K(y))|^2 dy  \\
&= \|D\Psi_K\|_{L^\infty(T_K)}^2 \int_K |\nabla_x g_i(x)|^2 |\det D\Psi_K^{-1}(x)|dx.
\end{align*}
Because $g_i=\partial v/\partial\tau_{K,i}$, this gives
\begin{align*}
\displaystyle
\left| \frac{\partial w}{\partial r_i} \right|_{H^1(T_K)}
\leq C_\Psi^{(5)} \left| \frac{\partial v}{\partial\tau_{K,i}} \right|_{H^1(K)},
\end{align*}
which is \eqref{eq:directional-H1-scaling}.

For the second derivative, we differentiate
\begin{align*}
\displaystyle
 \frac{\partial w}{\partial r_j}(y) = Dv(\Psi_K(y))D\Psi_K(y) r_j
\end{align*}
in the direction $r_i$. The product rule and the chain rule give
\begin{align*}
\displaystyle
\frac{\partial^2 w}{\partial r_i\partial r_j}(y)
&= D^2v(\Psi_K(y))[D\Psi_K(y)r_i,D\Psi_K(y)r_j]\\
&\quad + Dv(\Psi_K(y))D^2\Psi_K(y)[r_i,r_j].
\end{align*}
Because $Dv=\nabla_x v$ in the scalar case, this is exactly \eqref{eq:second-directional-transfer}.  Rewriting the same formula with $x=\Psi_K(y)$ and using the definitions of $\tau_{K,i}$ and $b_{K,ij}$ gives \eqref{eq:second-directional-transfer-K}.  Finally, the $L^2$-estimate follows again from the change of variables $x=\Psi_K(y)$.
\end{proof}

\begin{remark}
The preceding lemmas show that the interpolation error on $K$ can be estimated through the affine-core interpolation error on $T_K$, and that the directional derivatives appearing on $T_K$ can be rewritten in terms of transported directional derivatives on $K$. This is the point where the affine anisotropy and the curved correction are separated.	
\end{remark}

\subsection{Affine-core interpolation estimates for $\mathbb{P}^1$}
\label{subsec:affine-core-P1-estimates}
Let $T_K \in \mathbb T_h$ be the affine core of $K$, and let $I^1_{T_K}$ be the linear Lagrange interpolation operator on $T_K$. For a sufficiently smooth function $w$ on $T_K$, the following anisotropic estimates are used.

\begin{lemma}[$L^2$-estimate on the affine core]
\label{lem:L2-affine-core}
Let $w\in H^2(T_K)$. Then,
\begin{align}
\displaystyle
&\|w-I^1_{T_K}w\|_{L^2(T_K)} \notag \\
&\leq C_{L^2}^L \left( h_{K,1}^2 \left\| \frac{\partial^2 w}{\partial r_1^2} \right\|_{L^{2}(T_K)} +2 h_{K,1} h_{K,2}  \left\| \frac{\partial^2 w}{\partial r_1 \partial r_2} \right\|_{L^{2}(T_K)} + h_{K,2}^2  \left\| \frac{\partial^2 w}{\partial r_2^2} \right\|_{L^{2}(T_K)} \right), \label{eq:L2-affine-core}
\end{align}
where the constant $C_{L^2}^L$ is independent of the anisotropic shape of $T_K$.
\end{lemma}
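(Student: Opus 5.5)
The plan is to use the two-step affine map $\Phi=\Phi_{T}\circ\Phi_{\widetilde T}$ of Section~\ref{ref=affine=subsec}. The rigid motion $\Phi_T$ changes no norms, so the core of the argument is to pull $w$ back to the reference triangle $\widehat T$ through $\Phi_{\widetilde T}$ alone. First, $\Phi_{T}$ is an isometry with $A_T\in O(2)$, and interpolation commutes with affine maps. Hence, with $\tilde w:=w\circ\Phi_{T}$ on $\widetilde T$, we have $\|w-I^1_{T_K}w\|_{L^2(T_K)}=\|\tilde w-I^1_{\widetilde T}\tilde w\|_{L^2(\widetilde T)}$. Moreover, $A_T e_1=r_1$ and $A_T(s,t)^\top=r_2$, so $\partial\tilde w/\partial\tilde x_1$ corresponds to $\partial w/\partial r_1$ and the direction $(s,t)^\top$ corresponds to $\partial w/\partial r_2$.

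Next, set $\hat w:=\tilde w\circ\Phi_{\widetilde T}=w\circ\Phi$. Since $A_{\widetilde T}=\widetilde A\widehat A$ has columns $h_1e_1$ and $h_2(s,t)^\top$, the chain rule gives exactly
\begin{align*}
\frac{\partial\hat w}{\partial\hat x_1}=h_{K,1}\frac{\partial w}{\partial r_1}\circ\Phi,\quad
\frac{\partial\hat w}{\partial\hat x_2}=h_{K,2}\frac{\partial w}{\partial r_2}\circ\Phi,\quad
\frac{\partial^2\hat w}{\partial\hat x_i\partial\hat x_j}=h_{K,i}h_{K,j}\frac{\partial^2 w}{\partial r_i\partial r_j}\circ\Phi .
\end{align*}
The Jacobian of the change of variables is $|\det A|=h_{K,1}h_{K,2}t=2|T_K|_2$. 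The essential observation is that each second derivative carries precisely the scaling of its own directions. Consequently, no factor $h_{K,1}/h_{K,2}$ and no factor $1/t$ from $\widetilde A^{-1}$ appears, which is where the usual isotropic argument would lose anisotropy-independence.

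On $\widehat T$ I will use the classical estimate $\|\hat w-I^1_{\widehat T}\hat w\|_{L^2(\widehat T)}\le \hat C|\hat w|_{H^2(\widehat T)}$. This follows from the Bramble--Hilbert lemma, since $I^1_{\widehat T}$ is bounded on $H^2(\widehat T)\subset C^0(\widehat T)$ and reproduces $\mathbb P^1$. I then bound $|\hat w|_{H^2(\widehat T)}\le\sum_{i,j}\|\partial^2\hat w/\partial\hat x_i\partial\hat x_j\|_{L^2(\widehat T)}$, which produces the two mixed terms and hence the factor $2$. Transforming back, each term gives $h_{K,i}h_{K,j}\|\partial^2 w/\partial r_i\partial r_j\|_{L^2(T_K)}$. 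The Jacobian factor $|\det A|^{1/2}$ enters the left side and $|\det A|^{-1/2}$ the right side, so it cancels, and the constant $C^L_{L^2}=\hat C$ depends only on $\widehat T$.

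The step I expect to need the most care is verifying that the scaling is exact in the anisotropic directions. Concretely, one must check that $D\Phi\,e_i=h_{K,i}r_i$, using Condition~\ref{cond1} and the vertex correspondence $p_i=\Phi(\hat p_i)$, rather than estimating through $\|A\|$ and $\|A^{-1}\|$. Once this identity is established, the estimate is a direct reference-element argument, and Assumption~\ref{ass:affine-semi-regularity} is not even needed for this $L^2$ bound.
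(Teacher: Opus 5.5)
Your proof is correct and complete. The identities $Ae_1=\Phi(\hat p_2)-\Phi(\hat p_1)=p_2-p_1=h_{K,1}r_1$ and $Ae_2=p_3-p_1=h_{K,2}r_2$ follow directly from the vertex correspondence. Hence $\partial^2\hat w/\partial\hat x_i\partial\hat x_j=h_{K,i}h_{K,j}\,(\partial^2 w/\partial r_i\partial r_j)\circ\Phi$ holds exactly. The Bramble--Hilbert bound on $\widehat T$ then transfers back with the Jacobian factors $|\det A|^{\pm1/2}$ cancelling, as you say.

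The paper does not prove this lemma itself. It simply cites Corollary~1 of Ishizaka--Kobayashi--Tsuchiya (2023), a general anisotropic interpolation framework that also yields the $H^1$ bound of Lemma~\ref{lem:H1-affine-core}. There $\widetilde A^{-1}$ enters and produces the factor $H_{T_K}/h_{T_K}$, and plain Bramble--Hilbert scaling is not enough.

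Your direct scaling argument is a self-contained replacement for the $L^2$ case. It buys several things:
\begin{itemize}
\item It identifies $C^L_{L^2}$ explicitly as the Bramble--Hilbert constant of $\widehat T$.
\item It shows that neither Assumption~\ref{ass:affine-semi-regularity} nor the longest-edge labelling of Condition~\ref{cond1} is needed; only the vertex correspondence is used.
\item It explains the factor $2$ in front of the mixed term via $|\hat w|_{H^2(\widehat T)}\le\sum_{i,j}\|\partial^2\hat w/\partial\hat x_i\partial\hat x_j\|_{L^2(\widehat T)}$.
\end{itemize}
What the citation buys in exchange is uniform treatment of the $L^2$ and $H^1$ estimates within one framework. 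Your intermediate step through $\tilde w=w\circ\Phi_T$ is harmless but unnecessary, since you end up working with $\Phi$ directly.
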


\begin{proof}
A proof can be found in \cite[Corollary 1]{IshKobTsu23}.
\end{proof}

\begin{lemma}[\(H^1\)-estimate on the affine core]
\label{lem:H1-affine-core}
Let $w\in H^2(T_K)$. Then,
\begin{align}
\displaystyle
 |w-I^1_{T_K}w|_{H^1(T_K)}
 &\leq C_{H^1}^L \frac{H_{T_K}}{h_{T_K}} \left( h_{K,1} \left| \frac{\partial w}{\partial r_1} \right|_{H^1(T_K)} + h_{K,2} \left| \frac{\partial w}{\partial r_2} \right|_{H^1(T_K)} \right), \label{eq:H1-affine-core}
\end{align}
where the constant $C_{H^1}^L$ is independent of the anisotropic shape of $T_K$.
\end{lemma}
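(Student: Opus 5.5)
The plan is to pull everything back to the reference triangle $\widehat T$ through $\Phi=\Phi_{T}\circ\Phi_{\widetilde T}$, with $A=A_T\widetilde A\widehat A$. On $\widehat T$ we use the classical edge-mean argument, and then push forward direction by direction. The point is to keep track of the derivatives along $r_1,r_2$ separately rather than through the norm $\|A^{-1}\|$. Bounding through $\|A^{-1}\|$ would produce a spurious factor $h_{K,1}/h_{K,2}$.

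Set $e:=w-I^1_{T_K}w$ and $\hat e:=e\circ\Phi$, $\hat w:=w\circ\Phi$. Since $I^1$ commutes with affine pullback, $\hat e=\hat w-I^1_{\widehat T}\hat w$. Since $A_T\in O(2)$, the columns of $A$ are $h_{K,1}r_1$ and $h_{K,2}r_2$. Hence
\begin{align*}
\partial_{\hat x_i}\hat e=h_{K,i}\,(\partial_{r_i}e)\circ\Phi ,\qquad
\partial_{\hat x_j}\partial_{\hat x_i}\hat w=h_{K,i}h_{K,j}\,(\partial_{r_j}\partial_{r_i}w)\circ\Phi ,
\end{align*}
and $|\det A|=h_{K,1}h_{K,2}t=2|T_K|_2$.

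\textbf{Step 1 (reference estimate).} Since $I^1_{\widehat T}\hat w$ is linear,
\begin{align*}
\partial_{\hat x_1}I^1_{\widehat T}\hat w=\hat w(\hat p_2)-\hat w(\hat p_1),
\end{align*}
which is the mean of $\partial_{\hat x_1}\hat w$ over the edge $\overline{\hat p_1\hat p_2}$. Thus $\partial_{\hat x_1}\hat e$ has zero mean on that edge. A Poincaré inequality with an edge-mean functional, whose constant is fixed because $\widehat T$ is fixed and which uses the trace theorem, then gives
\begin{align*}
\|\partial_{\hat x_1}\hat e\|_{L^2(\widehat T)}\le C|\partial_{\hat x_1}\hat w|_{H^1(\widehat T)} .
\end{align*}
The analogous bound holds for $\hat x_2$ using the edge $\overline{\hat p_1\hat p_3}$.

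\textbf{Step 2 (directional push-forward).} Changing variables in Step 1 and using the scaling identities above, the factors $|\det A|^{\pm1/2}$ cancel and one factor $h_{K,i}$ cancels. This yields
\begin{align*}
\|\partial_{r_i}e\|_{L^2(T_K)}\le C\sum_{j=1}^2 h_{K,j}\|\partial_{r_j}\partial_{r_i}w\|_{L^2(T_K)} .
\end{align*}
Summing over $i$ and using $\partial_{r_j}\partial_{r_i}w=\partial_{r_i}\partial_{r_j}w$, every term $h_{K,j}\|\partial_{r_i}(\partial_{r_j}w)\|$ is bounded by $h_{K,j}|\partial_{r_j}w|_{H^1(T_K)}$. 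This gives
\begin{align*}
\sum_i\|\partial_{r_i}e\|_{L^2(T_K)}\le C\sum_j h_{K,j}|\partial_{r_j}w|_{H^1(T_K)} .
\end{align*}
The mixed terms are exactly where the symmetry of second derivatives is needed to obtain $h_{K,j}$ paired with $|\partial_{r_j}w|_{H^1}$ rather than $h_{K,1}$ throughout.

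\textbf{Step 3 (recovering the full gradient).} Write $\nabla e$ in the dual basis of $\{r_1,r_2\}$. Since $|r_1\times r_2|=t$, we have pointwise
\begin{align*}
|\nabla e|\le \frac{C}{t}\bigl(|\partial_{r_1}e|+|\partial_{r_2}e|\bigr),
\end{align*}
because the inverse of the matrix with rows $r_1^\top,r_2^\top$ has norm at most $\sqrt2/t$. Finally, $|T_K|_2=\tfrac12h_{K,1}h_{K,2}t$ gives
\begin{align*}
\frac{1}{t}=\frac{h_{K,1}h_{K,2}}{2|T_K|_2}=\frac{H_{T_K}}{2h_{T_K}},
\end{align*}
which produces the prefactor $H_{T_K}/h_{T_K}$ in \eqref{eq:H1-affine-core}. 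All constants come from $\widehat T$ alone, so they are independent of the shape of $T_K$.

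The main obstacle is the bookkeeping in Step 2: the anisotropic pairing must be kept, and the only loss allowed is the single factor $1/t$ in Step 3. Alternatively, the whole statement could be quoted from \cite{IshKobTsu23}, as was done for Lemma~\ref{lem:L2-affine-core}.
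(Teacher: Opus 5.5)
Your proof is correct. The following steps all check out:
\begin{itemize}
\item the identification of the columns of $A$ as $h_{K,1}r_1$ and $h_{K,2}r_2$, together with $|\det A|=2|T_K|_2$;
\item the edge-mean Poincar\'e bound on $\widehat T$;
\item the use of $\partial_{r_j}\partial_{r_i}w=\partial_{r_i}\partial_{r_j}w$ to pair $h_{K,j}$ with $|\partial_{r_j}w|_{H^1(T_K)}$;
\item the bound $\|R^{-1}\|_2\le\sqrt2/t$ with $1/t=H_{T_K}/(2h_{T_K})$.
\end{itemize}

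The paper gives no argument of its own; it only cites \cite[Corollary 1]{IshKobTsu23}. As far as I can tell, your route is essentially the mechanism behind that result: a two-step affine pullback, a directional estimate on $\widehat T$, and the shear factor. Your matrix $R^{-1}=A_T\widetilde A^{-\top}$ is exactly that shear factor, and its norm $\sqrt{1+|s|}/t$ produces $H_{T_K}/h_{T_K}$. So your proposal amounts to a self-contained unpacking of the citation for $d=2$, $k=1$, $p=2$.
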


\begin{proof}
A proof can be found in \cite[Corollary 1]{IshKobTsu23}.
\end{proof}

\subsection{$L^2$-interpolation estimate on exact curved elements}
\label{subsec:L2-curved-estimate}

\begin{theorem}[$L^2$-estimate]
\label{thm:L2-curved-P1}
Let $K \in \mathbb K_h$, and let $T_K \in \mathbb T_h$ be its affine core.  Suppose that Assumptions~\ref{ass:affine-semi-regularity} and \ref{ass:curved-correction} hold.  Let $I_L^1$ be the exact-curved linear Lagrange interpolation operator on $K$.  Then, for $v \in  H^2(K)$,
\begin{align}
\displaystyle
&\|v-I_L^1v\|_{L^2(K)} \notag \\
&\quad \leq C^I_{L^2} \sum_{i=1}^{2}\sum_{j=1}^{2} h_{K,i} h_{K,j} \left(  \|D^2v[\tau_{K,i},\tau_{K,j}]\|_{L^2(K)} + \|\nabla_x v\cdot b_{K,ij}\|_{L^2(K)} \right), \label{eq:L2-curved-P1}
\end{align}
where the constant $C_{L^2}^I$ is independent of $h$, $K$, and the anisotropic shape of the affine core $T_K$, but depends on the uniform bounds for $\Psi_K$ and $\Psi_K^{-1}$ in Assumption~\ref{ass:curved-correction}.
\end{theorem}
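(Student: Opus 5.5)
The plan is to combine the error reduction \eqref{eq:error-reduction}, the $L^2$-transfer of Lemma~\ref{lem:L2-H1-transfer}, the affine-core estimate of Lemma~\ref{lem:L2-affine-core}, and the second-derivative transfer \eqref{eq:second-directional-L2-transfer} of Lemma~\ref{lem:directional-derivative-transfer}, in that order. Set $w:=v\circ\Psi_K$ on $T_K$. First I check that $w\in H^2(T_K)$: $\Psi_K$ is a $\mathcal C^2$-diffeomorphism with $D\Psi_K$, $D^2\Psi_K$ bounded by Assumption~\ref{ass:curved-correction}, and $\det D\Psi_K$ is bounded away from zero because $D\Psi_K^{-1}$ is bounded. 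Hence composition preserves $H^2$, so $I^1_{T_K}w$ is well defined. The same holds for $v$ on $K$, since $H^2\subset\mathcal C^0$ in two dimensions and so the nodal values exist.

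By \eqref{eq:error-reduction}, $v-I_L^1v=(w-I^1_{T_K}w)\circ\Psi_K^{-1}$. Applying \eqref{eq:L2-transfer} to the function $v-I_L^1v$, whose pullback is $w-I^1_{T_K}w$, gives
\begin{align*}
\|v-I_L^1v\|_{L^2(K)}\le C_\Psi^{(3)}\|w-I^1_{T_K}w\|_{L^2(T_K)}.
\end{align*}
Next I invoke Lemma~\ref{lem:L2-affine-core}. I rewrite its right-hand side as $\sum_{i,j=1}^2 h_{K,i}h_{K,j}\|\partial^2 w/\partial r_i\partial r_j\|_{L^2(T_K)}$; the factor $2$ in the mixed term is exactly the two symmetric terms $(i,j)=(1,2),(2,1)$, using the symmetry of second derivatives. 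The constant $C^L_{L^2}$ is shape-independent.

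Finally, for each pair $(i,j)$ I apply \eqref{eq:second-directional-L2-transfer} to bound $\|\partial^2 w/\partial r_i\partial r_j\|_{L^2(T_K)}$ by $C_\Psi^{(6)}\bigl(\|D^2v[\tau_{K,i},\tau_{K,j}]\|_{L^2(K)}+\|\nabla_xv\cdot b_{K,ij}\|_{L^2(K)}\bigr)$. Collecting the constants gives $C^I_{L^2}:=C_\Psi^{(3)}C^L_{L^2}C_\Psi^{(6)}$. This depends only on the bounds of Assumption~\ref{ass:curved-correction} and not on $h$, $K$, or the anisotropy of $T_K$.

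The only step requiring care is the last transfer, since \eqref{eq:second-directional-L2-transfer} was stated somewhat tersely. Concretely, the identity \eqref{eq:second-directional-transfer-K} and the change of variables $y=\Psi_K^{-1}(x)$ give $\int_{T_K}|\partial^2_{r_ir_j}w|^2\,dy=\int_K|D^2v[\tau_{K,i},\tau_{K,j}]+\nabla_xv\cdot b_{K,ij}|^2\,|\det D\Psi_K^{-1}|\,dx$. The Jacobian factor is bounded by $\|D\Psi_K^{-1}\|_{L^\infty}^2$, and the triangle inequality then splits the sum. Validity for $v\in H^2(K)$ rather than $\mathcal C^2$ follows by density, since both sides are continuous in the $H^2(K)$-norm given the $L^\infty$ bounds on $\tau_{K,i}$ and $b_{K,ij}$. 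Assumption~\ref{ass:affine-semi-regularity} is not actually needed for this $L^2$ bound; it enters only in the $H^1$ estimate.
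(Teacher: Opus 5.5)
Your proposal is correct and matches the paper's proof: reduce to the affine core via the error identity, apply the $L^2$-transfer through $\Psi_K$, then the anisotropic affine-core estimate of Lemma~\ref{lem:L2-affine-core}, then the second-directional-derivative transfer, giving $C^I_{L^2}=C_\Psi^{(3)}C^L_{L^2}C_\Psi^{(6)}$. Your extra checks are all sound and consistent with the paper: that $w\in H^2(T_K)$, the bookkeeping of the factor $2$ in the mixed term, and the remark that Assumption~\ref{ass:affine-semi-regularity} is not actually used (the paper's proof does not invoke it either).
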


\begin{proof}
We set $w:=v\circ\Psi_K$.  From \eqref{eq:error-reduction},
\begin{align*}
\displaystyle
 v-I_L^1v=(w-I^1_{T_K}w)\circ\Psi_K^{-1}.
\end{align*}
The $L^2$-transfer estimate in Lemma~\ref{lem:L2-H1-transfer} gives
\begin{align*}
\displaystyle
\|v-I_L^1v\|_{L^2(K)}
\leq C_\Psi^{(3)} \|w-I^1_{T_K}w\|_{L^2(T_K)}.
\end{align*}
Applying Lemma~\ref{lem:L2-affine-core}, we obtain
\begin{align*}
\displaystyle
\|v-I_L^1v\|_{L^2(K)}
\leq C_\Psi^{(3)} C_{L^2}^L \sum_{i,j=1}^{2} h_{K,i} h_{K,j} \left\| \frac{\partial^2 w}{\partial r_i\partial r_j} \right\|_{L^2(T_K)}.
\end{align*}
Lemma~\ref{lem:directional-derivative-transfer} yields
\begin{align*}
\displaystyle
\left\| \frac{\partial^2 w}{\partial r_i\partial r_j} \right\|_{L^2(T_K)}
\leq C_\Psi^{(6)} \left( \|D^2v[\tau_{K,i},\tau_{K,j}]\|_{L^2(K)} + \|\nabla_xv\cdot b_{K,ij}\|_{L^2(K)} \right).
\end{align*}
Combining these estimates proves the assertion.
\end{proof}

\subsection{$H^1$-interpolation estimate on exact curved elements}
\label{subsec:H1-curved-estimate}

\begin{theorem}[\(H^1\)-estimate]
\label{thm:H1-curved-P1}
Let $K \in \mathbb K_h$, and let $T_K \in \mathbb T_h$ be its affine core.  Suppose that Assumptions~\ref{ass:affine-semi-regularity} and \ref{ass:curved-correction} hold.  Let $I_L^1$ be the exact-curved linear Lagrange interpolation operator on $K$.  Then, for $v\in H^2(K)$,
\begin{align}
\displaystyle
|v-I_L^1v|_{H^1(K)}
\leq C^I_{H^1} \sum_{i=1}^{2} h_{K,i} \left| \frac{\partial v}{\partial \tau_{K,i}} \right|_{H^1(K)}, \label{eq:H1-curved-P1}
\end{align}
where the constant $C_{H^1}^I$ is independent of $h$, $K$, and the anisotropic shape of the affine core $T_K$, but depends on the uniform bounds for $\Psi_K$ and $\Psi_K^{-1}$ in Assumption~\ref{ass:curved-correction}.

\end{theorem}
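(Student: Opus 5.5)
The proof will follow the same three-step pattern as the proof of Theorem~\ref{thm:L2-curved-P1}, with the $H^1$ versions of each ingredient in place of the $L^2$ ones. First, set $w:=v\circ\Psi_K$. Since $\Psi_K$ is a $\mathcal C^2$-diffeomorphism with bounded first derivatives of $\Psi_K$ and $\Psi_K^{-1}$ (Assumption~\ref{ass:curved-correction}), we have $w\in H^2(T_K)$. The reduction \eqref{eq:error-reduction} with $k=1$ gives $v-I_L^1v=(w-I^1_{T_K}w)\circ\Psi_K^{-1}$. Applying the $H^1$-transfer estimate \eqref{eq:H1-transfer} of Lemma~\ref{lem:L2-H1-transfer} to the function $v-I_L^1v\in H^1(K)$, whose pullback is $w-I^1_{T_K}w$, we get
\begin{align*}
|v-I_L^1v|_{H^1(K)}\leq C_\Psi^{(4)}\,|w-I^1_{T_K}w|_{H^1(T_K)}.
\end{align*}

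Next, I will apply the affine-core anisotropic estimate, Lemma~\ref{lem:H1-affine-core}, to the right-hand side. This bounds it by
\begin{align*}
C_{H^1}^L\frac{H_{T_K}}{h_{T_K}}\sum_{i=1}^2 h_{K,i}\left|\frac{\partial w}{\partial r_i}\right|_{H^1(T_K)}.
\end{align*}
The factor $H_{T_K}/h_{T_K}$ is then absorbed using the affine semi-regularity \eqref{eq:affine-semi-regularity}, which gives $H_{T_K}/h_{T_K}\le\gamma_0$. This is exactly where the independence of the constant from the anisotropic shape of $T_K$ enters: the shape enters only through $\gamma_0$, and $C_{H^1}^L$ is itself shape-independent.

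Finally, each $|\partial w/\partial r_i|_{H^1(T_K)}$ is converted back to the physical element by the directional transfer estimate \eqref{eq:directional-H1-scaling} of Lemma~\ref{lem:directional-derivative-transfer}, which gives $|\partial w/\partial r_i|_{H^1(T_K)}\le C_\Psi^{(5)}|\partial v/\partial\tau_{K,i}|_{H^1(K)}$. Combining the three steps yields \eqref{eq:H1-curved-P1} with $C^I_{H^1}:=C_\Psi^{(4)}C_{H^1}^L\gamma_0C_\Psi^{(5)}$. This constant depends only on the bounds in Assumption~\ref{ass:curved-correction} (through $C_\Psi^{(4)},C_\Psi^{(5)}$) and on $\gamma_0$; it does not depend on $h$, on $K$, or on the shape of $T_K$.

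No step is a genuine obstacle, since all ingredients are available. The points that need care are regularity and the transfer lemma. One must check that $w\in H^2(T_K)$ and that $\partial v/\partial\tau_{K,i}\in H^1(K)$. The latter holds because $\tau_{K,i}=D\Psi_K(\Psi_K^{-1}(\cdot))r_i$ is $W^{1,\infty}$, using the bound on $D^2\Psi_K$ together with the bound on $D\Psi_K^{-1}$. One must also note that the bound $|\det D\Psi_K^{-1}|\le C$ used in the proof of \eqref{eq:directional-H1-scaling} follows from $\|D\Psi_K^{-1}\|_{L^\infty(K)}\le C_\Psi^{(1)}$. With these checked, the argument reduces to a direct chain of the cited lemmas.
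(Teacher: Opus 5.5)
Your proposal is correct and follows the paper's proof step for step: you apply the $H^1$-transfer \eqref{eq:H1-transfer} to the error representation \eqref{eq:error-reduction}, then Lemma~\ref{lem:H1-affine-core} with $H_{T_K}/h_{T_K}$ absorbed by $\gamma_0$, then \eqref{eq:directional-H1-scaling}, arriving at the same constant $C_\Psi^{(4)}C_{H^1}^L\gamma_0C_\Psi^{(5)}$. Your extra checks ($w\in H^2(T_K)$, $\partial v/\partial\tau_{K,i}\in H^1(K)$, and the determinant bound from $\|D\Psi_K^{-1}\|_{L^\infty(K)}$) are valid and supply details the paper leaves implicit.
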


\begin{proof}
We set $w:=v\circ\Psi_K$.  From the $H^1$-transfer estimate \eqref{eq:H1-transfer},
\begin{align*}
\displaystyle
|v-I_L^1v|_{H^1(K)}
\leq C_\Psi^{(4)} |w-I^1_{T_K}w|_{H^1(T_K)}.
\end{align*}
Using Lemma~\ref{lem:H1-affine-core} and Assumption~\ref{ass:affine-semi-regularity}, we obtain
\begin{align*}
\displaystyle
|v-I_L^1v|_{H^1(K)}
&\leq C_\Psi^{(4)} C_{H^1}^L \gamma_0 \sum_{i=1}^{2} h_{K,i} \left| \frac{\partial w}{\partial r_i} \right|_{H^1(T_K)}.
\end{align*}
Lemma~\ref{lem:directional-derivative-transfer} yields
\begin{align*}
\displaystyle
\left| \frac{\partial w}{\partial r_i} \right|_{H^1(T_K)}
\leq C_\Psi^{(5)} \left| \frac{\partial v}{\partial \tau_{K,i}} \right|_{H^1(K)}. 
\end{align*}
Combining the estimates gives \eqref{eq:H1-curved-P1}.
\end{proof}

\section{Application to the Poisson problem} \label{model=sec}

\subsection{Model problem}
\label{subsec:model-problem}
Let $\Omega\subset\mathbb R^2$ be a bounded domain with piecewise smooth boundary. We consider the homogeneous Dirichlet problem
\begin{align}
\displaystyle
- \varDelta u = f \quad \text{in }\Omega, \quad u=0 \quad \text{on }\partial\Omega. \label{eq:poisson-problem}
\end{align}
For any $f \in L^2(\Omega)$, the weak formulation is to find $u \in H^1_0(\Omega)$ such that
\begin{align}
\displaystyle
a(u,v)=(f,v) \quad \forall v \in H^1_0(\Omega), \label{eq:weak-poisson}
\end{align}
where
\begin{align*}
\displaystyle
a(u,v) := \int_\Omega \nabla u \cdot \nabla v dx, \quad (f,v) := \int_\Omega fv dx.
\end{align*}
By the Lax--Milgram lemma, there exists a unique solution $u \in H_0^1(\Omega)$ for any $f \in L^2(\Omega)$ and it holds that
\begin{align*}
\displaystyle
| u |_{H^1(\Omega)} \leq C_P(\Omega) \| f \|,
\end{align*}
where $C_P(\Omega)$ is the Poincar$\rm{\acute{e}}$ constant depending on $\Omega$. Furthermore, if $\Omega$ is convex, then $u \in H^2(\Omega)$ and 
\begin{align}
\displaystyle
| u |_{H^2(\Omega)} \leq \| \varDelta u \|. \label{cr13}
\end{align}
The proof can be found in, for example, \cite[Theorem 3.1.1.2, Theorem 3.2.1.2]{Gri11}. 

\subsection{Exact-curved finite element approximation}
\label{subsec:discrete-problem}
Let $V^1_{h0}\subset H^1_0(\Omega)$ be the exact-curved conforming linear Lagrange finite element space defined in Section~\ref{subsec:lagrange-fe-spaces}. The finite element approximation is to find $u_h\in V^1_{h0}$ such that
\begin{align}
\displaystyle
a(u_h,v_h)=(f,v_h) \quad \forall v_h \in V^1_{h0}. \label{eq:discrete-poisson}
\end{align}

\begin{remark}
Because the mesh boundary coincides with $\partial\Omega$, the homogeneous Dirichlet condition is imposed on the physical boundary itself.
\end{remark}

\subsection{Error estimates}
\label{subsec:errors}

\begin{theorem}[Energy-norm error estimate]
\label{thm:poisson-H1-error}
Let $u \in H^1_0(\Omega)$ be the solution of \eqref{eq:weak-poisson}, and let \(u_h\in V^1_{h0}\) be the solution of \eqref{eq:discrete-poisson}. Let $K \in \mathbb K_h$, and let $T_K \in \mathbb T_h$ be its affine core.  Suppose that Assumptions~\ref{ass:affine-semi-regularity} and \ref{ass:curved-correction} hold. If $u \in H^2(K)$ for all $K \in \mathbb K_h$, then
\begin{align}
\displaystyle
|u-u_h|_{H^1(\Omega)}
\leq C_{H^1}^{E} \left[ \sum_{K \in \mathbb K_h} \left( \sum_{i=1}^{2} h_{K,i} \left| \frac{\partial u}{\partial \tau_{K,i}} \right|_{H^1(K)} \right)^2 \right]^{1/2}, \label{eq:poisson-H1-anisotropic-error}
\end{align}
where the constant $C_{H^1}^{E}$ is independent of $h$, $K$, and the anisotropic shape of the affine core $T_K$, but depends on the uniform bounds for $\Psi_K$ and $\Psi_K^{-1}$ in Assumption~\ref{ass:curved-correction}.
\end{theorem}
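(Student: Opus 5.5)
The plan is to combine Céa's lemma with the local $H^1$-interpolation estimate of Theorem~\ref{thm:H1-curved-P1}.

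\textbf{Step 1: Céa's lemma.} Since $V^1_{h0}\subset H^1_0(\Omega)$ is conforming, subtracting \eqref{eq:discrete-poisson} from \eqref{eq:weak-poisson} gives Galerkin orthogonality, $a(u-u_h,v_h)=0$ for all $v_h\in V^1_{h0}$. Because $a(\cdot,\cdot)$ is exactly the $H^1$-seminorm inner product, the best-approximation property
\begin{align*}
|u-u_h|_{H^1(\Omega)} = \inf_{v_h\in V^1_{h0}} |u-v_h|_{H^1(\Omega)}
\end{align*}
follows with constant one.

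\textbf{Step 2: construct the global interpolant.} We take $v_h := I_h u$, defined elementwise by $(I_h u)|_K := I_L^1(u|_K)$. Since $u\in H^2(K)\subset \mathcal C^0(K)$ in two dimensions, the nodal values exist.

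To see that $I_h u\in V^1_{h0}$ we must check two things.

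\emph{Continuity across curved edges.} By the exact-triangulation assumption, a shared curved edge is the same geometric curve from both sides. On each side, the trace of $I_L^1 u$ is the push-forward of the affine function on the reference edge that interpolates the two endpoint values. We must verify that both traces coincide as functions on the curve. This requires that the two element maps $F_{K_1}$ and $F_{K_2}$ induce the same parametrisation of the shared edge, up to orientation. This compatibility is implicit in the mesh data, namely that the edge is ``represented by the same geometric curve from both sides''. I would state it explicitly as part of the conformity of $\mathbb K_h$.

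\emph{Boundary condition.} On boundary edges, $u=0$ at the vertices, so the trace of $I_L^1 u$ vanishes by the remark on strong imposition of Dirichlet conditions. Hence $I_h u \in H^1_0(\Omega)\cap V_h^1 = V^1_{h0}$.

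This conformity step is the main obstacle. It relies on consistent edge parametrisations being part of the mesh assumptions rather than on anything proved so far.

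\textbf{Step 3: local estimates and summation.} Since $I_hu$ is globally continuous and piecewise $H^1$, it lies in $H^1(\Omega)$, and the seminorm splits over elements:
\begin{align*}
|u-I_hu|_{H^1(\Omega)}^2=\sum_{K\in\mathbb K_h}|u-I_L^1u|_{H^1(K)}^2 .
\end{align*}
Applying Theorem~\ref{thm:H1-curved-P1} on each $K$ bounds each term by
\begin{align*}
(C^I_{H^1})^2\left(\sum_{i=1}^{2} h_{K,i}\left|\frac{\partial u}{\partial\tau_{K,i}}\right|_{H^1(K)}\right)^2 .
\end{align*}
The constant $C^I_{H^1}$ is uniform in $K$ by Assumptions~\ref{ass:affine-semi-regularity} and \ref{ass:curved-correction}.

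Taking square roots and combining with Step 1 gives \eqref{eq:poisson-H1-anisotropic-error} with $C^E_{H^1}=C^I_{H^1}$, which inherits the stated independence of $h$, $K$, and the anisotropic shape of $T_K$.
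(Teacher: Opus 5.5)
Your proof is correct and follows the same route as the paper: Galerkin orthogonality and C\'ea's lemma with constant one, the choice $v_h := I_L^1 u$ elementwise, and summation of the local bound of Theorem~\ref{thm:H1-curved-P1}, giving $C^E_{H^1} = C^I_{H^1}$. The paper's proof uses $I_L^1 u \in V^1_{h0}$ without checking it, and your conformity check exposes a mesh assumption it relies on silently. That assumption is that the two element maps must parametrise each shared curved edge in the same way, since otherwise the elementwise interpolant is discontinuous.
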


\begin{proof}
From the Galerkin orthogonality,
\begin{align*}
\displaystyle
a(u-u_h,v_h)=0 \quad \forall v_h \in V^1_{h0}.
\end{align*}
Therefore, C\'ea's lemma gives
\begin{align*}
\displaystyle
|u-u_h|_{H^1(\Omega)} \leq \inf_{v_h \in V^1_{h0}} |u-v_h|_{H^1(\Omega)}.
\end{align*}
Taking $v_h := I_L^1u$, we obtain
\begin{align*}
\displaystyle
|u-u_h|_{H^1(\Omega)}
\leq
|u-I_L^1u|_{H^1(\Omega)}.
\end{align*}
From \eqref{eq:H1-curved-P1},
\begin{align*}
\displaystyle
|u-I_L^1u|_{H^1(\Omega)}^2
&= \sum_{K \in \mathbb K_h} |u-I_L^1u|_{H^1(K)}^2 \\
&\leq \sum_{K \in \mathbb K_h} (C^I_{H^1})^2 \left( \sum_{i=1}^{2} h_{K,i} \left| \frac{\partial u}{\partial \tau_{K,i}} \right|_{H^1(K)} \right)^2,
\end{align*}
which leads to
\begin{align}
\displaystyle
|u-u_h|_{H^1(\Omega)}
\leq C_{H^1}^{E}\left[ \sum_{K \in \mathbb K_h} \left( \sum_{i=1}^{2} h_{K,i} \left| \frac{\partial u}{\partial \tau_{K,i}} \right|_{H^1(K)} \right)^2 \right]^{1/2}.
\end{align}
\end{proof}

\begin{corollary}[Standard $H^1$-error estimate]
\label{cor:standard-H1-error}
Under the assumptions of Theorem~\ref{thm:poisson-H1-error}, suppose in addition that there exist positive constants $M_1$ and $M_2$ such that
\begin{align*}
\displaystyle
M_1 h_{K,i} \leq h_{T_K} \leq M_2 h_{K,i} \quad i= 1,2.
\end{align*}
Then,
\begin{align}
\displaystyle
 |u-u_h|_{H^1(\Omega)}
 \leq
 C_{H^1}^{S} h \|u\|_{H^2(\Omega)}, \label{eq:standard-H1-error}
\end{align}
where the constant $C_{H^1}^{S}$ is independent of $h$, $K$, and the anisotropic shape of the affine core $T_K$, but depends on the uniform bounds for $\Psi_K$ and $\Psi_K^{-1}$ in Assumption~\ref{ass:curved-correction}.

\end{corollary}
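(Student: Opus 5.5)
The plan is to start from the anisotropic energy estimate \eqref{eq:poisson-H1-anisotropic-error} of Theorem~\ref{thm:poisson-H1-error} and bound each local term $h_{K,i}\,|\partial u/\partial\tau_{K,i}|_{H^1(K)}$ by $h$ times the full local $H^2$-norm of $u$. Summing over $K$ then gives \eqref{eq:standard-H1-error}. Since $h_{K,i}\le h_{T_K}\le h$ by Condition~\ref{cond1} and the definition of $h$, the mesh-size factor is immediate. The extra hypothesis $M_1 h_{K,i}\le h_{T_K}\le M_2 h_{K,i}$ is not actually needed for this direction. I would only remark that it makes the mesh shape-regular, so the resulting bound is sharp rather than wasteful.

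The main step is to bound $|\partial u/\partial\tau_{K,i}|_{H^1(K)}$ by $C\|u\|_{H^2(K)}$, with $C$ depending only on Assumption~\ref{ass:curved-correction}. Write $g_i=\tau_{K,i}\cdot\nabla_x u$. Then
\[
\nabla_x g_i=(D\tau_{K,i})^\top\nabla_x u+D^2u\,\tau_{K,i}.
\]
By \eqref{eq:transported-direction} we have $|\tau_{K,i}|\le\|D\Psi_K\|_{L^\infty(T_K)}\le C_\Psi^{(1)}$, since $|r_i|=1$. For the derivative of the transported field, the chain rule gives
\[
D\tau_{K,i}(x)=D^2\Psi_K(\Psi_K^{-1}(x))[\,\cdot\,,r_i]\,D\Psi_K^{-1}(x).
\]
Hence $|D\tau_{K,i}|\le C_\Psi^{(2)}C_\Psi^{(1)}$. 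Consequently
\[
\Bigl|\frac{\partial u}{\partial\tau_{K,i}}\Bigr|_{H^1(K)}\le C\bigl(|u|_{H^1(K)}+|u|_{H^2(K)}\bigr)\le C\|u\|_{H^2(K)}.
\]
This first-order term is what forces the full norm $\|u\|_{H^2}$ rather than the seminorm in \eqref{eq:standard-H1-error}. Its presence reflects the curvature of $\Psi_K$; in the affine case $D\tau_{K,i}=0$ and the term disappears. As in the earlier lemmas, I would do the computation for $u\in\mathcal C^2(K)$ and conclude by density in $H^2(K)$.

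To finish, insert these bounds into \eqref{eq:poisson-H1-anisotropic-error}:
\[
\Bigl(\sum_{i=1}^{2} h_{K,i}\Bigl|\frac{\partial u}{\partial\tau_{K,i}}\Bigr|_{H^1(K)}\Bigr)^2\le 4C^2h^2\|u\|_{H^2(K)}^2.
\]
Summing over $K\in\mathbb K_h$ uses that the elements have disjoint interiors and cover $\overline\Omega$, by \eqref{eq:exact-covering}--\eqref{eq:disjoint-interiors}. This gives $\sum_K\|u\|_{H^2(K)}^2=\|u\|_{H^2(\Omega)}^2$, and taking square roots yields \eqref{eq:standard-H1-error} with $C_{H^1}^S=2C\,C_{H^1}^E$.

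The only real obstacle is the differentiation of $\tau_{K,i}$: one must check that it is controlled solely through $D^2\Psi_K$ and $D\Psi_K^{-1}$, both covered by Assumption~\ref{ass:curved-correction}. The rest is bookkeeping.
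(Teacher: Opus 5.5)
The paper states this corollary without proof. The estimate is only used later, in the proof of Theorem~\ref{thm:poisson-L2-error}, as ``the standard interpolation consequence of Theorem~\ref{thm:H1-curved-P1}'', and there only $h_{K,i}\le h_{T_K}$ is invoked.

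Your argument is correct and supplies exactly the missing step. The ingredients are:
\begin{itemize}
\item $h_{K,i}\le h_{T_K}\le h$;
\item the product rule $\nabla_x(\tau_{K,i}\cdot\nabla_x u)=(D\tau_{K,i})^\top\nabla_x u+D^2u\,\tau_{K,i}$;
\item the chain-rule identity $D\tau_{K,i}=D^2\Psi_K(\Psi_K^{-1}(\cdot))[\,\cdot\,,r_i]\,D\Psi_K^{-1}$.
\end{itemize}
Together these give $|\partial u/\partial\tau_{K,i}|_{H^1(K)}\le C\|u\|_{H^2(K)}$, with $C$ depending only on $C_\Psi^{(1)}$ and $C_\Psi^{(2)}$. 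You are also right that the hypothesis $M_1h_{K,i}\le h_{T_K}\le M_2h_{K,i}$ plays no role. This matches how the paper itself uses the estimate.

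What your explicit computation adds is the lower-order term $(D\tau_{K,i})^\top\nabla_x u$. It explains why the full norm $\|u\|_{H^2(\Omega)}$, rather than the seminorm, appears in the corollary. It also shows that the seminorm $|z|_{H^2(\Omega)}$ written in the proof of Theorem~\ref{thm:poisson-L2-error} is justified only after this term is absorbed. That is harmless there, since $|z|_{H^1(\Omega)}\le C_P\|e_h\|_{L^2(\Omega)}$ follows from the dual problem.

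One bookkeeping point concerns the identity $\sum_K\|u\|_{H^2(K)}^2=\|u\|_{H^2(\Omega)}^2$. Besides the covering and disjoint-interior properties, it needs two further facts:
\begin{itemize}
\item the curved edges are null sets;
\item $u\in H^2(\Omega)$, not merely $u\in H^2(K)$ elementwise.
\end{itemize}
The second fact is implicit in the statement. For the Poisson solution it follows from interior $H^2_{\mathrm{loc}}$-regularity combined with $u\in H^2(K)$ on every element.
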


\begin{theorem}[$L^2$-error estimate]
\label{thm:poisson-L2-error}
Under the assumptions of Theorem~\ref{thm:poisson-H1-error}, suppose in addition that $\Omega$ is convex. Then,
\begin{align}
\displaystyle
 \|u-u_h\|_{L^2(\Omega)}
\leq C_{L^2}^{E} h \left[ \sum_{K \in \mathbb K_h} \left( \sum_{i=1}^{2} h_{K,i} \left| \frac{\partial u}{\partial \tau_{K,i}} \right|_{H^1(K)} \right)^2 \right]^{1/2},
\end{align}
where the constant $C_{L^2}^{E}$ is independent of $h$, $K$, and the anisotropic shape of the affine core $T_K$, but depends on the uniform bounds for $\Psi_K$ and $\Psi_K^{-1}$ in Assumption~\ref{ass:curved-correction}.
\end{theorem}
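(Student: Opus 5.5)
This is the Aubin--Nitsche duality argument, with the interpolation estimate of Theorem~\ref{thm:H1-curved-P1} used for the dual solution. The plan has four steps.

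\emph{Step 1 (dual problem).} Set $e := u-u_h \in H^1_0(\Omega)$ and let $z \in H^1_0(\Omega)$ solve
\begin{align*}
a(v,z)=(e,v) \quad \forall v\in H^1_0(\Omega).
\end{align*}
Since $\Omega$ is convex, \eqref{cr13} gives $z\in H^2(\Omega)$ with $|z|_{H^2(\Omega)}\le \|e\|_{L^2(\Omega)}$. The Poincar\'e inequality controls the lower-order parts, so $\|z\|_{H^2(\Omega)}\le C\|e\|_{L^2(\Omega)}$.

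\emph{Step 2 (Galerkin orthogonality).} Testing with $v=e$ and using $a(e,v_h)=0$ for all $v_h\in V^1_{h0}$ with $v_h=I_L^1 z$, we get
\begin{align*}
\|e\|_{L^2(\Omega)}^2=a(e,z)=a(e,z-I_L^1z)\le |e|_{H^1(\Omega)}\,|z-I_L^1z|_{H^1(\Omega)}.
\end{align*}
Here $I_L^1z\in V^1_{h0}$ because the nodes lie on the exact boundary and the edge traces are push-forwards of polynomials. This is the remark following \eqref{eq:global-lagrange-space-zero}; continuity across curved edges holds because shared edges are represented by the same curve from both sides.

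\emph{Step 3 (interpolation of the dual solution).} Apply Theorem~\ref{thm:H1-curved-P1} elementwise:
\begin{align*}
|z-I_L^1z|_{H^1(\Omega)}^2\le (C^I_{H^1})^2\sum_{K}\Big(\sum_{i}h_{K,i}\Big|\frac{\partial z}{\partial\tau_{K,i}}\Big|_{H^1(K)}\Big)^2 .
\end{align*}
The main obstacle is bounding $|\partial z/\partial\tau_{K,i}|_{H^1(K)}$ by $\|z\|_{H^2(K)}$. Since $\partial_{x_k}(\tau_{K,i}\cdot\nabla z)$ contains both $D^2z[\cdot,\tau_{K,i}]$ and $(\partial_{x_k}\tau_{K,i})\cdot\nabla z$, we need
\begin{align*}
|\tau_{K,i}|\le C_\Psi^{(1)}, \qquad
|\nabla_x\tau_{K,i}|\le \|D^2\Psi_K\|_{L^\infty}\,\|D\Psi_K^{-1}\|_{L^\infty}\le C_\Psi^{(1)}C_\Psi^{(2)}.
\end{align*}
Both follow from Assumption~\ref{ass:curved-correction}, because $\tau_{K,i}=D\Psi_K(\Psi_K^{-1}(x))r_i$ and $|r_i|=1$. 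This gives $|\partial z/\partial\tau_{K,i}|_{H^1(K)}\le C\|z\|_{H^2(K)}$. Combining it with $h_{K,i}\le h_{T_K}\le h$ and summing over $K$, we obtain
\begin{align*}
|z-I_L^1z|_{H^1(\Omega)}\le C h\|z\|_{H^2(\Omega)}\le Ch\|e\|_{L^2(\Omega)}.
\end{align*}

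\emph{Step 4 (conclusion).} Dividing Step 2 by $\|e\|_{L^2(\Omega)}$ gives $\|e\|_{L^2(\Omega)}\le Ch\,|e|_{H^1(\Omega)}$. Theorem~\ref{thm:poisson-H1-error} then bounds $|e|_{H^1(\Omega)}$ by the bracketed anisotropic sum, which yields the claim. Every constant depends only on $C^I_{H^1}$, $C_{H^1}^E$, $C_\Psi^{(1)}$, $C_\Psi^{(2)}$, $\gamma_0$ and the Poincar\'e constant, so none depends on $h$, $K$, or the anisotropy of $T_K$.
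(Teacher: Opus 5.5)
Your proposal is correct and is essentially the paper's own proof: the same Aubin--Nitsche duality with $v_h=I_L^1z$, Theorem~\ref{thm:H1-curved-P1} applied to the dual solution together with $h_{K,i}\le h_{T_K}\le h$, and Theorem~\ref{thm:poisson-H1-error} to bound $|e|_{H^1(\Omega)}$. Your Step~3 is slightly more careful than the paper, which writes the bound with $|z|_{H^2(\Omega)}$ and calls it a ``standard interpolation consequence'', whereas the term $(\partial_{x_k}\tau_{K,i})\cdot\nabla z$ really requires the full norm $\|z\|_{H^2(\Omega)}$ and hence your Poincar\'e step (a minor caveat: for $I_L^1z$ to be conforming, the two element maps of a shared edge must induce the same parametrisation of that edge, not merely the same image curve, an assumption the paper also leaves implicit).
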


\begin{proof}
We set $e_h := u - u_h \neq 0$. Let  $z \in H^2(\Omega) \cap H_0^1(\Omega)$ satisfy 
\begin{align}
\displaystyle
a(\varphi , z ) = ( \varphi , e_h) \quad \forall \varphi \in H_0^1(\Omega), \label{Lag4=9}
\end{align}
and
\begin{align*}
\displaystyle
| z |_{H^2(\Omega)} \leq \| e_h \|_{L^2(\Omega)}.
\end{align*}
Then,
\begin{align*}
\displaystyle
\| u - u_h \|_{L^2(\Omega)}
&\leq \frac{1}{\| e_h\|_{L^2(\Omega)}} ( e_h,  e_h) = \frac{1}{\| e_h\|_{L^2(\Omega)}} a(e_h,z).
\end{align*}
From the Galerkin orthogonality,
\begin{align*}
\displaystyle
a(e_h,v_h)=0 \quad \forall v_h \in V^1_{h0}.
\end{align*}
Taking $v_h := I_L^1 z$, we obtain
\begin{align*}
\displaystyle
\| u - u_h \|_{L^2(\Omega)}
&\leq  \frac{1}{\| e_h\|_{L^2(\Omega)}} a(e_h,z - I_L^1 z) \\
&\leq \frac{1}{\| e_h\|_{L^2(\Omega)}} |e_h|_{H^1(\Omega)} |z - I_L^1 z|_{H^1(\Omega)}.
\end{align*}
From the construction of the affine core, we have
\begin{align*}
\displaystyle
 h_{K,i} \leq h_{T_K} \quad i= 1,2.
\end{align*}
Therefore, the standard interpolation consequence of Theorem~\ref{thm:H1-curved-P1} gives
\begin{align*}
\displaystyle
 |z-I_L^1 z |_{H^1(\Omega)}
 \leq
 C_{H^1}^{S} h |z|_{H^2(\Omega)} \leq  C_{H^1}^{S} h \| e_h \|_{L^2(\Omega)}.
\end{align*}
Using \eqref{eq:poisson-H1-anisotropic-error} yields
\begin{align*}
\displaystyle
 \|u-u_h\|_{L^2(\Omega)}
\leq C_{L^2}^{E} h \left[ \sum_{K \in \mathbb K_h} \left( \sum_{i=1}^{2} h_{K,i} \left| \frac{\partial u}{\partial \tau_{K,i}} \right|_{H^1(K)} \right)^2 \right]^{1/2}.
\end{align*}
\end{proof}

\section{Numerical algorithm for exact-curved assembly}
\label{sec:numerical-algorithm}

\subsection{Exact curved element assembly}
\label{subsec:exact-curved-assembly}
Let $K \in \mathbb K_h$ be an exact curved triangle with element map
\begin{align*}
\displaystyle
F_K=\Psi_K\circ\Phi_{T_K}:\widehat T \to K.
\end{align*}
The following elementwise assembly is the standard pullback construction for finite element methods; see, for example, Ciarlet~\cite{Cia78}.

Let $\{\hat \varphi_i\}_{i=1}^{3}$ be the standard linear Lagrange basis functions on the reference triangle $\widehat T$. The corresponding basis functions on $K$ are
\begin{align*}
\displaystyle
\varphi_{K,i} = \hat \varphi_i\circ F_K^{-1}, \quad i=1,2,3.
\end{align*}
Thus, for $x=F_K(\hat x)$,
\begin{align*}
\displaystyle
\varphi_{K,i}(x) = \hat \varphi_i (\hat x).
\end{align*}
From the chain rule,
\begin{align*}
\displaystyle
\nabla_x \varphi_{K,i}(x)
= DF_K(\hat x)^{- \top} \nabla_{\hat x} \hat \varphi_i (\hat x).
\end{align*}
Because
\begin{align*}
\displaystyle
F_K=\Psi_K\circ\Phi_{T_K},
\end{align*}
we have
\begin{align*}
\displaystyle
DF_K(\hat x)
= D\Psi_K(\Phi_{T_K}(\hat x))D\Phi_{T_K}.
\end{align*}
Therefore, the affine scaling and the curved correction are also separated in the numerical assembly. The local stiffness matrix is given as
\begin{align*}
\displaystyle
A^K_{ij}
&:= \int_K \nabla_x\varphi_{K,j}\cdot\nabla_x\varphi_{K,i} dx \\
&= \int_{\widehat T} \left( DF_K(\hat x)^{-\top} \nabla_{\hat x}\hat\varphi_j(\hat x) \right) \cdot \left( DF_K(\hat x)^{-\top} \nabla_{\hat x} \hat\varphi_i (\hat x) \right) |\det DF_K(\hat x)| d \hat x.
\end{align*}
Similarly, the local external force vector is
\begin{align*}
\displaystyle
b^K_i
:= \int_K f\varphi_{K,i}\,dx
= \int_{\widehat T} f(F_K(\hat x)) \hat\varphi_i(\hat x) |\det DF_K(\hat x)| d\hat x .
\end{align*}
The global stiffness matrix and load vector are then obtained by the usual assembly procedure using the global numbering of the exact-curved Lagrange nodes.

\subsection{Algorithm for the discrete problem}
\label{subsec:algorithm}
The following procedure describes the assembly of the discrete problem \eqref{eq:discrete-poisson}.

\begin{enumerate}
\item Generate an exact curved triangulation $\mathbb K_h$ of $\Omega$. For each element $K$, store its affine core $T_K$ and the map $F_K=\Psi_K\circ\Phi_{T_K}$.

\item For each element $K \in \mathbb K_h$, choose quadrature points $\{\hat x_q\}_q$ and weights $\{\hat\omega_q\}_q$ on the reference triangle $\widehat T$.

\item For each quadrature point $\hat x_q$, compute
\begin{align*}
\displaystyle
x_q = F_K(\hat x_q), \quad J_q=DF_K(\hat x_q), \quad \omega_q=|\det J_q| \hat \omega_q .
\end{align*}

\item Evaluate the reference basis functions $\hat\varphi_i(\hat x_q)$ and their reference gradients $\nabla_{\hat x}\hat\varphi_i(\hat x_q)$.  Then, compute
\begin{align*}
\displaystyle
\nabla_x\varphi_{K,i}(x_q)
= J_q^{-\top}\nabla_{\hat x}\hat\varphi_i(\hat x_q).
\end{align*}

\item Add the local stiffness contributions
\begin{align*}
\displaystyle
A^K_{ij}
\mathrel{+}=
\omega_q \left(J_q^{-\top}\nabla_{\hat x} \hat\varphi_j (\hat x_q)\right) \cdot \left(J_q^{-\top}\nabla_{\hat x} \hat\varphi_i (\hat x_q)\right),
\end{align*}
and the local external force contributions
\begin{align*}
\displaystyle
b^K_i
\mathrel{+}=
\omega_q f(x_q) \hat\varphi_i (\hat x_q).
\end{align*}
Here, $\omega_q$ already contains the Jacobian factor $|\det J_q|$, and therefore no additional determinant factor appears in this step.

\item Assemble the global linear system and impose the homogeneous Dirichlet condition on the exact boundary nodes.

\item Solve the resulting linear system.
\end{enumerate}

In the numerical experiment below, we distinguish the polynomial degree of the finite element unknown from the order of the geometric representation. The finite element space is kept fixed as the linear Lagrange space, whereas the geometry order is varied.  Geometry order one corresponds to a straight-sided polygonal representation, while higher geometry orders provide curved coordinate maps for the elements.

\subsection{Implementation viewpoint}
\label{subsec:implementation-viewpoint}
In the numerical experiment, we employ the curved coordinate map of the imported mesh as a computational representation of the element map $F_K$. It is important to note that we do not assert that the finite element software provides the analytic map $\Psi_K$ utilized in the theoretical framework. Instead, the coordinate map stored within the mesh assumes the role of $F_K$ at the level of quadrature and basis-function evaluation.

Although the variational form is expressed in physical coordinates, the actual element integration is conducted on the reference cell by the form compiler. The coordinate map of the mesh supplies $F_K$, and the factors $DF_K^{-\top}$ and $|\det DF_K|$ are internally incorporated in the evaluation of the physical gradient and the integration measure, respectively.

This approach is akin to an isoparametric curved-geometry implementation. However, in the analysis, the curved map is not equated with an interpolation of the coordinate functions. The map $F_K=\Psi_K\circ\Phi_{T_K}$ is regarded as geometric data, with the affine core $T_K$ and the curved correction $\Psi_K$ remaining distinct entities in the estimates.

\section{Numerical experiment: curved geometry versus polygonal geometry}
\label{sec:numerical-experiment}
We present a numerical experiment on the unit disk in order to illustrate the difference between a straight-sided polygonal representation and a curved geometric representation.  The finite element space is kept fixed as the linear Lagrange space; only the geometry order is varied.  Thus, the experiment separates the effect of the geometric representation from the approximation degree of the discrete unknown.

\subsection{Test problem and implementation setting}
\label{subsec:test-problem-setting}
Let $\Omega=\{(x_1,x_2)^\top\in\mathbb R^2: \ x_1^2+x_2^2<1\}$ be the unit disk.  We choose the exact solution
\begin{align*}
\displaystyle
u(x_1,x_2)=1-x_1^2-x_2^2 .
\end{align*}
Then, $u=0$ on $\partial\Omega$ and $-\varDelta u=4$. Therefore, in \eqref{eq:poisson-problem}, we take $f=4$.

We compare three geometry orders,
\begin{align*}
\displaystyle
 q_{\rm geo}=1,2,3,
\end{align*}
while keeping the finite element degree fixed as $k=1$.  The case $q_{\rm geo}=1$ corresponds to a straight-sided polygonal representation of the disk, whereas $q_{\rm geo}=2,3$ correspond to curved geometric representations.  The same variational form and the same $\mathbb{P}^1$ finite element space are used in all cases.  The quadrature degree is fixed as $q_{\rm quad}=8$.

\subsection{Error quantities}
\label{subsec:error-quantities}
We first record two geometric errors.  The area error is defined by
\begin{align*}
\displaystyle
E_{\rm area}:=\bigl||\Omega_h|_2-\pi\bigr|,
\end{align*}
where
\begin{align*}
\displaystyle
|\Omega_h|_2
= \sum_{K\in\mathbb K_h} \int_{\widehat T}|\det DF_K(\hat x)| d \hat x .
\end{align*}
The boundary radius error is defined as
\begin{align*}
\displaystyle
E_{\rm bdry}:= \max_{x_q\in\partial\Omega_h} \bigl||x_q|-1\bigr|,
\end{align*}
where the maximum is taken over the sample points obtained from the boundary coordinate maps.

For the finite element solution $u_h$, we compute the relative errors
\begin{align*}
\displaystyle
E_{H^1}
:= \left( \sum_{K\in\mathbb K_h} \int_K|\nabla(u-u_h)|^2 dx \right)^{1/2} /|u|_{H^1(\Omega)}
\end{align*}
and
\begin{align*}
\displaystyle
E_{L^2}
:= \left( \sum_{K\in\mathbb K_h} \int_K|u-u_h|^2 dx \right)^{1/2} /\|u\|_{L^2(\Omega)} .
\end{align*}
The observed rate between two consecutive meshes is computed by
\begin{align*}
\displaystyle
\mathrm{rate} = \frac{\log(E_{\rm old}/E_{\rm new})}{\log(h_{\rm old}/h_{\rm new})}.
\end{align*}
The theoretical estimates predict $E_{H^1}=O(h)$ and $E_{L^2}=O(h^2)$.

\subsection{Numerical results}
\label{subsec:numerical-results}
The numerical results are reported in two parts.  Table~\ref{tab:geometric-errors} shows the geometric errors for the unit disk. For \(q_{\rm geo}=1\), the boundary is represented by straight chords, and the area and boundary errors decrease as the mesh is refined.  For \(q_{\rm geo}=2,3\), the errors are much smaller, which confirms that the computation uses a curved geometric representation rather than a purely polygonal one.

\begin{table}[htbp]
\centering
\caption{Geometric errors for the unit disk.}
\label{tab:geometric-errors}
\begin{tabular}{ccccc}
\hline
\(q_{\rm geo}\) & level & \(h\) & \(E_{\rm area}\) & \(E_{\rm bdry}\) \\
\hline
1 & 0 & \(4.000{\times}10^{-1}\) & \(8.013{\times}10^{-2}\) & \(1.916{\times}10^{-2}\) \\
1 & 1 & \(2.000{\times}10^{-1}\) & \(2.015{\times}10^{-2}\) & \(4.802{\times}10^{-3}\) \\
1 & 2 & \(1.000{\times}10^{-1}\) & \(5.205{\times}10^{-3}\) & \(1.240{\times}10^{-3}\) \\
1 & 3 & \(5.000{\times}10^{-2}\) & \(1.302{\times}10^{-3}\) & \(3.100{\times}10^{-4}\) \\
2 & 0 & \(4.000{\times}10^{-1}\) & \(1.549{\times}10^{-4}\) & \(4.596{\times}10^{-5}\) \\
2 & 1 & \(2.000{\times}10^{-1}\) & \(9.717{\times}10^{-6}\) & \(2.887{\times}10^{-6}\) \\
2 & 2 & \(1.000{\times}10^{-1}\) & \(6.473{\times}10^{-7}\) & \(1.924{\times}10^{-7}\) \\
2 & 3 & \(5.000{\times}10^{-2}\) & \(4.047{\times}10^{-8}\) & \(1.203{\times}10^{-8}\) \\
3 & 0 & \(4.000{\times}10^{-1}\) & \(2.282{\times}10^{-5}\) & \(1.184{\times}10^{-5}\) \\
3 & 1 & \(2.000{\times}10^{-1}\) & \(1.437{\times}10^{-6}\) & \(7.451{\times}10^{-7}\) \\
3 & 2 & \(1.000{\times}10^{-1}\) & \(9.587{\times}10^{-8}\) & \(4.968{\times}10^{-8}\) \\
3 & 3 & \(5.000{\times}10^{-2}\) & \(5.995{\times}10^{-9}\) & \(3.107{\times}10^{-9}\) \\
\hline
\end{tabular}
\end{table}

Table~\ref{tab:fem-errors} reports the relative finite element errors.  For all geometry orders, the observed rates approach first order in the $H^1$-seminorm and second order in the $L^2$-norm.  This agrees with the theoretical estimates in Section~\ref{model=sec}.

\begin{table}[htbp]
\centering
\caption{Relative finite element errors for the Poisson problem on the unit disk.}
\label{tab:fem-errors}
\begin{tabular}{ccccccc}
\hline
\(q_{\rm geo}\) & level & \(h\) & \(E_{H^1}\) & rate & \(E_{L^2}\) & rate \\
\hline
1 & 0 & \(4.000{\times}10^{-1}\) & \(1.465{\times}10^{-1}\) & -- & \(6.316{\times}10^{-2}\) & -- \\
1 & 1 & \(2.000{\times}10^{-1}\) & \(7.746{\times}10^{-2}\) & 0.92 & \(1.674{\times}10^{-2}\) & 1.92 \\
1 & 2 & \(1.000{\times}10^{-1}\) & \(4.044{\times}10^{-2}\) & 0.94 & \(4.426{\times}10^{-3}\) & 1.92 \\
1 & 3 & \(5.000{\times}10^{-2}\) & \(2.031{\times}10^{-2}\) & 0.99 & \(1.111{\times}10^{-3}\) & 1.99 \\
2 & 0 & \(4.000{\times}10^{-1}\) & \(1.480{\times}10^{-1}\) & -- & \(2.598{\times}10^{-2}\) & -- \\
2 & 1 & \(2.000{\times}10^{-1}\) & \(7.800{\times}10^{-2}\) & 0.92 & \(6.940{\times}10^{-3}\) & 1.90 \\
2 & 2 & \(1.000{\times}10^{-1}\) & \(4.057{\times}10^{-2}\) & 0.94 & \(1.837{\times}10^{-3}\) & 1.92 \\
2 & 3 & \(5.000{\times}10^{-2}\) & \(2.036{\times}10^{-2}\) & 0.99 & \(4.600{\times}10^{-4}\) & 2.00 \\
3 & 0 & \(4.000{\times}10^{-1}\) & \(1.426{\times}10^{-1}\) & -- & \(2.447{\times}10^{-2}\) & -- \\
3 & 1 & \(2.000{\times}10^{-1}\) & \(7.665{\times}10^{-2}\) & 0.90 & \(6.737{\times}10^{-3}\) & 1.86 \\
3 & 2 & \(1.000{\times}10^{-1}\) & \(4.023{\times}10^{-2}\) & 0.93 & \(1.811{\times}10^{-3}\) & 1.89 \\
3 & 3 & \(5.000{\times}10^{-2}\) & \(2.027{\times}10^{-2}\) & 0.99 & \(4.565{\times}10^{-4}\) & 1.99 \\
\hline
\end{tabular}
\end{table}

The finite element errors exhibit notable similarity for $q_{\rm geo}=1,2,3$, particularly in the $H^1$-seminorm. This observation aligns with the fact that the finite element space remains constant as the linear Lagrange space. While increasing the geometry order enhances the geometric representation of the disk, it does not alter the approximation degree of the discrete unknown. Consequently, the leading $H^1$-error continues to be dictated by the $\mathbb{P}^1$ approximation error.

The objective of this numerical experiment is not to assert superiority over conventional finite element software implementations. Instead, it illustrates the computational changes that occur when transitioning from a straight-sided polygonal mesh to a curved geometric mesh. In this context, the role of the curved geometry is not to enhance the convergence order of the $\mathbb{P}^1$ method, but rather to distinguish the geometric error from the finite element approximation error.

\section{Concluding remarks}
\label{sec:concluding-remarks}
In this study, we have developed a finite element framework for addressing the Poisson problem on exact curved domains in two dimensions. The central aspect of this construction is the factorization of the element map
\begin{align*}
\displaystyle
 F_K=\Psi_K\circ\Phi_{T_K},
\end{align*}
which distinguishes the affine scaling of the element from the curved correction. This perspective enables the interpolation error on the curved element $K$ to be reduced to an interpolation estimate on the affine core $T_K$, followed by a transfer argument through $\Psi_K$. Notably, the geometric assumptions are imposed separately on the affine cores and the curved corrections.

We have applied this framework to the conforming linear Lagrange finite element approximation of the Poisson problem. The resulting error estimates demonstrate that the conventional $H^1$- and $L^2$-convergence orders are preserved on exact curved triangulations. The numerical experiment conducted on the unit disk illustrates the distinction between a straight-sided polygonal representation and a curved geometric representation. Specifically, increasing the geometry order significantly reduces the geometric error, while the leading finite element error remains governed by the $\mathbb{P}^1$ approximation.

Several natural directions for future research emerge. First, the current analysis should be extended to higher-order Lagrange elements. In such cases, the distinction between the polynomial degree of the finite element unknown and the order of the geometric representation becomes increasingly significant. Second, the three-dimensional case is of particular interest, where exact curved tetrahedra and surface patches require a more careful treatment of the geometry map and its derivatives. Third, it would be beneficial to develop analogous exact-curved constructions for nonconforming and mixed finite elements, such as Crouzeix--Raviart and Raviart--Thomas elements.

Another important avenue is the application to incompressible flow problems. For instance, the factorization $F_K=\Psi_K\circ\Phi_{T_K}$ may prove useful in the design and analysis of pressure-robust schemes for the Stokes problem on curved domains. Finally, the present exact-curved perspective may also be relevant to moving-boundary and free-boundary problems, where the separation between affine mesh deformation and curved geometric correction could provide a valuable analytical framework.

\end{document}